\theoremstyle{plain}
\newtheorem{theorem}{Theorem}[section]
\newtheorem{lemma}[theorem]{Lemma}
\newtheorem{proposition}[theorem]{Proposition}
\theoremstyle{definition}
\newtheorem{definition}[theorem]{Definition}
\newtheorem{problem}{Problem}
\newtheorem{remark}[theorem]{Remark}
\theoremstyle{remark}
\def\O{\Omega}
\def\ds{\displaystyle}
\newcommand{\Lip}{\textup{Lip}}
\newcommand{\N}{\mathbb{N}}
\newcommand{\R}{\mathbb{R}}
\newcommand{\M}{\mathcal{M}}
\newcommand{\W}{\mathcal{W}}
\newcommand{\PP}{\mathcal{P}_1}
\DeclareMathOperator{\dive}{div}
\DeclareMathOperator{\supp}{supp}
\DeclareMathOperator{\Adm}{Adm}
\begin{document}
	
	\title{Optimal Control Problems in Transport Dynamics}

	\author{Mattia Bongini\thanks{Faculty of Mathematics, Technische Universit\"at M\"unchen, Boltzmannstrasse 3
			D-85748, Garching bei M\"unchen, Germany. {\tt mattia.bongini@ma.tum.de}}, Giuseppe Buttazzo\thanks{Dipartimento di Matematica, Universit\`a di Pisa,
			Largo B. Pontecorvo 5, Pisa, 56127, Italy.{\tt buttazzo@dm.unipi.it}}}

	\maketitle







\maketitle


\begin{abstract}
In the present paper we deal with an optimal control problem related to a model in population dynamics; more precisely, the goal is to modify the behavior of a given density of individuals via another population of agents interacting with the first. The cost functional to be minimized to determine the dynamics of the second population takes into account the desired target or configuration to be reached as well as the quantity of control agents. Several applications may fall into this framework, as for instance driving a mass of pedestrian in (or out of) a certain location; influencing the stock market by acting on a small quantity of key investors; controlling a swarm of unmanned aerial vehicles by means of few piloted drones.
\end{abstract}

{\bf Keywords: } Transport dynamics; optimal control problems; Wasserstein distance; functionals on measures.

{\bf AMS Subject Classification: } 49J20, 49J45, 60K30, 35B37.

\section{Introduction}\label{intro}

In recent years several models of transport dynamics have been studied; if $\rho(t,x)$ represents the density of a given population at time $t$ in a space location $x$, the evolution of $\rho$, whenever the total mass of the population is conserved, is described by means of the {\it continuity equation}
$$\frac{\partial\rho}{\partial t}(t,x)=-\dive_x\big(v(t,x)\rho(t,x)\big),$$
where $v(t,x)$ is the velocity of the population motion. The vector field $v(t,x)$ may depend on $\rho$ in a rather general way; here we are interested in the cases where
$$v(t,x)=(K*\rho)(t,x)+f(t,x),$$
being $f(t,x)$ an {\it external velocity} field, $K(t,x)$ a {\it self-interaction kernel}, and $*$ the convolution operator
$$(K*\rho)(t,x)=\int_{\Omega} K(t,x-y)\,d\rho(t,y).$$
Our ambient space is a domain $\O$ of $\R^d$, which we take bounded and regular enough; the case of unbounded domains $\O$ can be treated in a similar way with some technical modifications. Models of the kind above have been widely considered in the literature; we refer for instance to \cite{AGS,bofo13,carrillo2014swarm,pedestrian,villani} and to the references therein.

In the present paper we deal with an optimal control problem related to the dynamics above; more precisely, the goal is to modify the behavior of the density $\rho$ of the population by influencing the behavior of another population of agents interacting with $\rho$, that we denote by $\nu$. This means that the function $f$ above is of the form
$$f(t,x) = (H*\nu)(t,x) \quad \text{for every } (t,x)\in[0,T]\times\Omega,$$
for a given \emph{cross-interaction kernel} $H$. The resulting state equation governing our optimal control problem is
\begin{equation}\label{eq:introprob}
\frac{\partial\rho}{\partial t}(t,x)=-\dive_x\Big(\big((K*\rho)(t,x)+(H*\nu)(t,x)\big)\rho(t,x)\Big),
\end{equation}
with initial condition
$$\rho(0,x)=\rho_0(x)\qquad\hbox{on }\O,$$
and boundary conditions
$$v(t,x)\cdot n(x)=0\qquad\hbox{on }(0,T]\times\partial\O,$$
where
$$v(t,x)=(K*\rho)(t,x)+(H*\nu)(t,x),$$
with $K,H$ suitable convolution kernels. Notice that, by setting $f=H*\nu$, equation \eqref{eq:introprob} has the form of a continuity equation, where $f$ is an external velocity field.

The dynamics of $\nu$ is determined by the minimization of a given functional $\mathcal{J}(\nu,\rho)$ taking into account the desired behavior of $\rho$ as well as the cost of the control agents $\nu$ (whose mass is allowed to vary). It is introduced in detail in Section \ref{svariat} by using the general theory of functionals defined on the space of measures, developed in \cite{bouchitte1990new,bobu92,bobu93}. Under rather mild assumptions on $\mathcal{J}$ we establish the existence of solutions for the optimal control problem with cost functional $\mathcal{J}(\nu,\rho)$ subject to the PDE constraint \eqref{eq:introprob}.

Notice that the formulation of our control problem differs significantly, for instance, from that of \textit{mean-field games}, introduced in \cite{lasrylions}, as rather than embedding decentralized control rules inside the dynamics of $\rho$ we introduce an external control mass $\nu$ that interacts with the original population with the goal to modify its behavior.

The reason to study such infinite dimensional optimal control problems instead of their discrete counterparts lies in the so-called \textit{curse of dimensionality}, term introduced by Richard Bellman in \cite{bellmandynamicprogramming} to describe the difficulty in solving optimization problems where the dimension of the state variable (which depends on the number of agents, in this case) is large: the goal is to compute a nearly optimal control strategy that does not depend anymore on the number of agents.

Several applications may fall into our framework; for instance
\begin{itemize}
\item driving a mass of pedestrian to (or out of) a certain location using a small number of stewards;
\item trying to stabilize the stock market in order to avoid systemic failures, by acting on few key investors with a relatively limited amount of resources;
\item computing the minimal amount of manually-controlled units such that a swarm of drones performs a given task (as, for instance, wind harvesting or the recognition of a given area).
\end{itemize}

In the present paper we do not perform numerical simulations; we want to stress that this issue presents several difficulties, mainly related to the nonlocal behavior of the governing state equations and to the nonconvexity of the cost functional. Some numerical simulations of problems of similar type have been performed in \cite{albi2015invisible,albikinetic}.

After introducing the model in Section \ref{sec:preliminaries} and the class of admissible controls in Section \ref{sec:admissible}, we state in Section \ref{svariat} the optimal control problem rigorously and we study its well-posedness; some variants are also considered. Section \ref{exj} is devoted to a list of functionals falling into our framework, and Section \ref{fornasolo} to the analysis of a natural control problem arising in pedestrian dynamics.

\section{Preliminaries}\label{sec:preliminaries}

\subsection{The Wasserstein space of probability measures}

Let $\O\subset\R^d$; we denote by $\M(\O)$ the set of finite positive measures on $\O$, and by $\M_M(\O)$ the set of positive measures with total mass less than or equal to $M>0$. It is well-known that the class $\M_M(\O)$ admits a metric $d$ topologically equivalent to the weak* convergence.

The space $\mathcal{P}(\O)$ is the subset of $\M(\O)$ whose elements are the probability measures on $\O$, i.e., $\mu\in\M(\O)$ for which $\mu(\O)=1$. The space $\mathcal{P}_p(\O)$ is the subset of $\mathcal{P}(\O)$ whose elements have finite $p$-th moment, i.e.,
$$\int_\O|x|^p\,d\mu(x)<+\infty.$$
Clearly $\mathcal{P}_p(\O)=\mathcal{P}(\O)$ when $\O$ is bounded. Finally, we denote by $\mathcal{P}_c(\O)$ the subset of $\mathcal{P}(\O)$ which consists of all probability measures with compact support. 

For any $\mu\in\mathcal{P}(\R^{d_1})$ and any Borel function $f:\R^{d_1}\to\R^{d_2}$, we denote by $f_{\#}\mu\in\mathcal{P}(\R^{d_2})$ the {\it push-forward of $\mu$ through $f$}, defined by
$$f_{\#}\mu(B):=\mu(f^{-1}(B))\qquad\text{for every Borel set $B$ of }\R^{d_2}.$$
In particular, if one considers the projection operators $\pi_1$ and $\pi_2$ defined on the product space $\R^{d_1}\times\R^{d_2}$, for every $\rho\in\mathcal{P}(\R^{d_1}\times\R^{d_2})$ we call {\it first} (resp., {\it second}) {\it marginal} of $\rho$ the probability measure $\pi_{1\#}\rho$ (resp., $\pi_{2\#}\rho$). Given $\mu\in\mathcal{P}(\R^{d_1})$ and $\nu\in\mathcal{P}(\R^{d_2})$, we denote by $\Gamma(\mu,\nu)$ the subset of all probability measures in $\mathcal{P}(\R^{d_1}\times\R^{d_2})$ with first marginal $\mu$ and second marginal $\nu$.

On the set $\mathcal{P}_p(\R^d)$ we consider the Wasserstein or Monge-Kantorovich-Rubinstein distance,
\begin{equation}\label{e_Wp}
\W_p(\mu,\nu)=\inf\left\{\int_{\R^{2d}}|x-y|^p\,d\rho(x,y)\ :\ \rho\in\Gamma(\mu,\nu)\right\}^{1/p}.
\end{equation}
If $p=1$ we have the equivalent expression for the Wasserstein distance:
$$\W_1(\mu,\nu)=\sup\left\{\int_{\R^d}\varphi(x)\,d(\mu-\nu)(x)\ :\ \varphi\in\Lip(\R^d),\ \Lip_{\R^d}(\varphi)\le1\right\},$$
where $\Lip_{\R^d}(\varphi)$ stands for the Lipschitz constant of $\varphi$ on $\R^d$. We denote by $\Gamma_o(\mu,\nu)$ the set of optimal plans for which the minimum is attained, i.e.,
$$\rho\in\Gamma_o(\mu,\nu)\iff\rho\in\Gamma(\mu,\nu)\text{ and }\int_{\R^{2d}}|x-y|^p\,d\rho(x,y)=\W_p(\mu,\nu)^p.$$
It is well-known that $\Gamma_o(\mu,\nu)$ is non-empty for every $(\mu,\nu)\in\mathcal{P}_p(\R^d)\times\mathcal{P}_p(\R^d)$, hence the infimum in \eqref{e_Wp} is actually a minimum. For more details, see e.g. \cite{AGS,villani}.


\subsection{The model}

Let $T>0$ be a finite-time horizon and let $\O\subset\R^d$ be a bounded open regular set, admitting the possibility of not being convex, i.e., $\O$ may have internal ``obstacles'' and ``walls''.

The dynamics of a conserved quantity $\rho$ under the effect of an external vector field $v:[0,T]\times\R^d\to\R^d$ is described by means of the {\it continuity equation}, given by
\begin{equation}\label{continuity}
\frac{\partial\rho}{\partial t}(t,x)=-\dive_x\big(v(t,x)\rho(t,x)\big).
\end{equation}
A detailed analysis of \eqref{continuity} in the case $\rho\in\mathcal{P}(\R^d)$ can be found in \cite{AGS}. To model the interaction of $\rho$ with the possible obstacles in $\O$, we prescribe \textit{reflecting boundary conditions} of the form
$$v(t,x)\cdot n(x)=0\qquad\text{on }[0,T]\times\partial\O,$$
where $n:\partial\O\to\R^d$ is the outer normal to the boundary of $\O$.

The evolution of the measure-valued curve $\rho:[0,T]\to\mathcal{P}(\R^d)$ is then given by
\begin{equation}\label{eq:compacteq}
\begin{cases}
\ds\frac{\partial\rho}{\partial t}(t,x)=-\dive_x\big(v(t,x)\rho(t,x)\big)&\text{in }(0,T]\times\O,\\
\rho(0,x)=\rho_0(x)&\text{on }\O,\\
v(t,x)\cdot n(x)=0&\text{on }(0,T]\times\partial\O,
\end{cases}
\end{equation}
where $\rho_0$ is an initial probability distribution with support contained in the interior of $\O$.

\begin{remark}
Notice that, thanks to the boundary conditions and $\supp(\rho_0)\subseteq\mathring{\O}$, then $\supp(\rho(t))\subseteq\mathring{\O}$ for all $t \in [0,T]$.	
\end{remark}

We now proceed to clarify our notion of solution for \eqref{eq:compacteq}.

\begin{definition}\label{defsolution}
Given $\rho:[0,T]\to\mathcal{P}(\O)$ and $v:[0,T]\times\O\to\R^d$, we say that $(\rho,v)$ is a {\it solution of} \eqref{eq:compacteq} if
\begin{itemize}
\item $\rho$ is continuous with respect to the Wasserstein distance $\W_1$;
\item $\rho$ satisfies $\rho(0)=\rho_0$ and for every $\phi\in\mathcal{C}^{\infty}_0([0,T];\mathcal{C}^{\infty}_b(\O))$ it holds
$$\int^T_0\int_\O\left(\frac{\partial\phi}{\partial t}(t,x)+v(t,x)\cdot\nabla\phi(t,x)\right)\,d\rho(t,x)\,dt=0.$$
\end{itemize}
\end{definition}

Notice that no continuity assumptions are made on the velocity field $v$, the definition of solution above is given in the weak distributional sense. Our main interest lies in the case that $v$ has a specific dependency on $\rho$, namely
\begin{equation}\label{ourvel}
v(t,x):=(K*\rho)(t,x)+f(t,x),\qquad\text{for all }(t,x)\in[0,T]\times\R^d.
\end{equation}
In the expression above, the function $f:[0,T]\times\R^d\to\R^d$ is an external velocity field and $*$ denotes the convolution operator
$$(K*\rho)(t,x):=\int_{\R^d}K(t,x-y)\,d\rho(t,y).$$
Here $K:[0,T]\times\R^d\to\R^d$ is a {\it self-interaction kernel} which models the self-interaction of $\rho$. Several instances of such interaction kernels can be found in biology, chemistry and social sciences, see for instance \cite{carrillo2014swarm,CS,hegselmann2002opinion,lennard24,reynolds1987flocks,vicsek2012collective}.

\section{The class of admissible velocity fields}\label{sec:admissible}

We now turn our attention to the solutions of system \eqref{eq:compacteq}; we show that, under mild conditions on the functions $K$ and $f$ appearing in \eqref{ourvel}, they exist and are unique. The following results generalize those in \cite{FornasierSolombRossiBongini}, and are reported to keep track of the explicit dependencies of the constants.

We start by introducing the class of $\ell$-{\it admissible functions}.


\begin{definition}\label{def:adm}
Fix $T>0$ and $\ell\in L^1(0,T)$. The class $\Adm_{\ell}([0,T]\times\R^d;\R^d)$ is the set of all functions $g:[0,T]\times\R^d\to\R^d$ satisfying:
\begin{enumerate}[($i$)]
\item $g$ is a Carath\'eodory function;
\item $|g(t,x)-g(t,y)|\le\ell(t)|x-y|$ for all $(t,x),(t,y)\in[0,T]\times\R^d$;
\item $|g(t,x)|\le\ell(t)(1+|x|)$ for all $(t,x)\in[0,T]\times\R^d$.
\end{enumerate}
\end{definition}

The following result, which can be found in \cite{MFOC}, shows that $\Adm_{\ell}([0,T]\times\R^d;\R^d)$ is compact with respect to a topology interacting with the $\W_1$ convergence of measures.

\begin{theorem}\label{thm:3}
Let $\ell\in L^1(0,T)$ and $1<p<\infty$. For any $(g_n)_{n\in\N}\subset\Adm_{\ell}([0,T]\times\R^d;\R^d)$ there exists a subsequence $(g_{n_k})_{k\in\N}$ and $g\in\Adm_{\ell}([0,T]\times\R^d;\R^d)$ such that
\begin{equation}\label{locweakconv}
\lim_{k\to\infty}\int_0^T\big\langle\phi(t),g_{n_k}(t,\cdot)-g(t,\cdot)\big\rangle\,dt=0,
\end{equation}
for all $\phi\in L^{\infty}([0,T],W^{-1,p'}(\R^d,\R^d))$ such that $\supp\big(\phi(t)\big)\subset B(0,r)$ for all $t\in[0,T]$, for some $r>0$. Here the symbol $\langle\cdot,\cdot\rangle$ denotes the duality pairing between $W^{1,p}$ and its dual $W^{-1,p'}$.

Moreover, given a compact set $\O\subset\R^d$, if $(\mu_n)_{n\in\N}$ is a sequence of functions from $[0,T]$ to $\mathcal{P}(\O)$ converging to $\mu:[0,T]\to\mathcal{P}(\O)$ in the Wasserstein distance, i.e.,
$$\lim_{n\to\infty}\W_1(\mu_n(t),\mu(t))=0\qquad\mbox{for all }t\in[0,T],$$
then for all $\varphi\in \mathcal{C}_c^1(\R^d,\R^d)$ and for all $t\in[0,T]$ it holds
\begin{align}\label{compmu}
\lim_{n\to\infty}\int_0^t\big\langle\varphi,g_n(s,\cdot)\mu_n(s)\big\rangle\,ds=\int_0^t\big\langle\varphi,g(s,\cdot)\mu(s)\big\rangle\,ds.
\end{align}
In addition, the inequality
\begin{align}\label{semicont+}
\int_0^T \langle \psi(g(t,\cdot)), \mu(t) \rangle\,dt \leq \liminf_{n\to \infty}\int_0^T \langle \psi(g_n(t,\cdot)), \mu_n(t) \rangle\,dt,
\end{align}
holds for any nonnegative convex globally Lipschitz function $\psi \colon \R^d\to[0,+\infty)$.
\end{theorem}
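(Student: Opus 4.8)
The plan is to prove the three assertions in sequence, with the compactness statement \eqref{locweakconv} being the engine that powers the rest. For \eqref{locweakconv}, I would first show that, for each fixed $t$, the family $\{g_n(t,\cdot)\}$ together with its first-order difference quotients is bounded in $L^\infty_{\loc}$ uniformly in $n$ (this is immediate from conditions $(ii)$ and $(iii)$ with the uniform bound $\ell(t)$), so that on each ball $B(0,r)$ the restrictions lie in a bounded subset of $W^{1,p}(B(0,r);\R^d)$ for every $p<\infty$. Since $W^{1,p}(B(0,r))$ is reflexive, bounded sequences have weakly convergent subsequences; combining this with the integrability of $\ell$ in time and a diagonal argument over an exhausting sequence of radii $r=1,2,\dots$, I would extract a single subsequence $(g_{n_k})$ and a limit $g$ such that, for a.e. $t$, $g_{n_k}(t,\cdot)\rightharpoonup g(t,\cdot)$ weakly in $W^{1,p}_{\loc}$. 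The pairing identity \eqref{locweakconv} then follows by dominated convergence in $t$, using that $\|g_{n_k}(t,\cdot)\|_{W^{1,p}(B(0,r))}\le C(r)\ell(t)\in L^1(0,T)$ dominates the integrand and the weak convergence kills the integrand pointwise in $t$. One must also check $g\in\Adm_\ell$: properties $(ii)$ and $(iii)$ pass to weak limits because they are convex constraints (equivalently, they define weakly closed sets), and measurability of $g$ in $t$ follows from it being an a.e. pointwise weak limit of measurable maps; the Carath\'eodory property then comes from the Lipschitz bound $(ii)$ giving continuity in $x$ for a.e. fixed $t$.

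For \eqref{compmu}, the point is that we are now pairing $g_n$ against the product $\varphi\,\mu_n(s)$ rather than a fixed test distribution, and both $g_n$ and $\mu_n$ vary. I would split
$$
\int_0^t\!\langle\varphi,g_n(s,\cdot)\mu_n(s)\rangle\,ds-\int_0^t\!\langle\varphi,g(s,\cdot)\mu(s)\rangle\,ds
=\int_0^t\!\langle\varphi,(g_n-g)(s,\cdot)\mu_n(s)\rangle\,ds+\int_0^t\!\langle\varphi,g(s,\cdot)(\mu_n(s)-\mu(s))\rangle\,ds.
$$
The second term tends to zero because, for a.e. fixed $s$, $g(s,\cdot)$ is Lipschitz hence $\varphi\,g(s,\cdot)\in\mathcal{C}_c(\O;\R^d)$ and $\mu_n(s)\to\mu(s)$ narrowly (from $\W_1$ convergence on the compact $\O$), while the integrand is dominated by $C\ell(s)\in L^1(0,T)$; apply dominated convergence. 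The first term is the genuinely delicate one: $(g_n-g)$ converges only weakly in $W^{1,p}_{\loc}$, so one cannot simply pass it against the varying measures $\mu_n(s)$. Here I would use compactness of $\O$ together with a Rellich–Kondrachov argument: on the fixed ball containing $\O$, the $W^{1,p}$-bounded sequence $g_n-g$ is relatively compact in $L^q(\O)$ for suitable $q$, but since $\mu_n(s)$ need not be absolutely continuous this is not quite enough. Instead I would regularize, replacing $g_n-g$ by a mollification $(g_n-g)*\eta_\delta$ which converges uniformly on $\O$ (to something going to $0$ as $n\to\infty$ then $\delta\to0$) by the $W^{1,p}$-bound and Morrey/Sobolev embedding, estimating the mollification error uniformly in $n$ using only the uniform Lipschitz bound $\ell(s)$; then pair the uniformly small quantity against the probability measure $\mu_n(s)$. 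This mollification-plus-uniform-Lipschitz estimate is what I expect to be the main technical obstacle.

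For \eqref{semicont+}, I would reduce to \eqref{compmu} by a standard convex-duality trick. Writing $\psi$ as the supremum of its affine minorants, $\psi(\xi)=\sup_{a\in A}(a\cdot\xi + b_a)$ over a countable family (possible since $\psi$ is convex and Lipschitz, so its subgradients are bounded and one may take $A$ countable dense), for each finite subfamily and each choice of which affine piece is active on which measurable set $E\subseteq[0,T]$ one gets
$$
\int_{E}\langle a\cdot g_n(t,\cdot)+b_a,\mu_n(t)\rangle\,dt\le\int_0^T\langle\psi(g_n(t,\cdot)),\mu_n(t)\rangle\,dt,
$$
and the left-hand side passes to the limit by \eqref{compmu} applied with the constant test field $\varphi\equiv a$ (truncated to be compactly supported, which is harmless since $\supp\mu_n(t)\subseteq\O$ compact) plus the trivial mass term for $b_a$. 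Taking the supremum over finite subfamilies and measurable partitions recovers $\int_0^T\langle\psi(g(t,\cdot)),\mu(t)\rangle\,dt$ on the left by monotone convergence, yielding the claimed lower semicontinuity. The only care needed is that $\psi(g_n(t,\cdot))$ is integrable against $\mu_n(t)$, which holds because $|g_n(t,x)|\le\ell(t)(1+\diam(\O)+|x_0|)$ on $\supp\mu_n(t)$ makes $\psi(g_n(t,\cdot))$ bounded by $C\ell(t)\in L^1(0,T)$.
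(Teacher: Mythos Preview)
The paper does not prove this result: it observes that $\Adm_\ell$ sits inside the class of Carath\'eodory fields treated by Fornasier--Solombrino and then cites Corollary~2.7, Theorem~2.10 and Theorem~2.12 of \cite{MFOC} for \eqref{locweakconv}, \eqref{compmu} and \eqref{semicont+} respectively. Your proposal is therefore an independent reconstruction of those cited results rather than a variant of anything argued in the paper.

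Your sketches for \eqref{compmu} and \eqref{semicont+} are correct. For \eqref{compmu} the mollification step works but is unnecessary: once you know $g_{n_k}(s,\cdot)\rightharpoonup g(s,\cdot)$ weakly in $W^{1,p}(B(0,r))$ for a.e.\ $s$, the uniform Lipschitz bound $\ell(s)$ together with Arzel\`a--Ascoli upgrades this to \emph{uniform} convergence on $\overline\Omega$ (any subsequential uniform limit must coincide with the weak one), after which the varying probability measures $\mu_n(s)$ pose no difficulty. The affine-minorant reduction for \eqref{semicont+} is the standard mechanism and is fine.

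There is, however, a genuine gap in your treatment of \eqref{locweakconv}. From $\|g_n(t,\cdot)\|_{W^{1,p}(B(0,r))}\le C(r)\ell(t)$ you cannot, by a diagonal argument over radii alone, extract a subsequence with $g_{n_k}(t,\cdot)\rightharpoonup g(t,\cdot)$ for a.e.\ $t$: the weakly convergent subsequence supplied by reflexivity depends on $t$, and there are uncountably many $t$. The compactness must be run in a Bochner space. For instance, $h_n:=g_n/(1+\ell)$ is bounded in $L^\infty\big(0,T;W^{1,p}(B(0,r))\big)$, which is the dual of the separable space $L^1\big(0,T;W^{-1,p'}(B(0,r))\big)$; sequential Banach--Alaoglu then yields a weak-$*$ convergent subsequence, and multiplying back by $(1+\ell)\in L^1(0,T)$ gives exactly \eqref{locweakconv}. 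Only afterwards, by testing against $\phi(t,x)=\chi_E(t)\psi(x)$, do you recover the pointwise-in-$t$ weak convergence that your arguments for \eqref{compmu} and \eqref{semicont+} actually consume.
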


\begin{proof}
It is straightforward to show that $\Adm_{\ell}([0,T]\times\R^d;\R^d)$ is contained within the class of Carath\'eodory functions $g:[0,T]\times\R^d\to\R^d$ satisfying
\begin{enumerate}[($a$)]
\item $g(t,\cdot)\in W^{1,\infty}_{\rm loc}(\R^d,\R^d)$ for almost every $t\in[0,T]$;
\item $|g(t,0)|\le\ell(t)$ for almost every $t\in[0,T]$;
\item $\Lip_{\R^d}(g(t,\cdot))\le\ell(t)$ for almost every $t\in[0,T]$.
\end{enumerate}
The result follows by Corollary 2.7, Theorem 2.10 and Theorem 2.12 of \cite{MFOC}.
\end{proof}
%
%

For the sake of brevity, from now on we set $\Adm_{\ell}:=\Adm_{\ell}([0,T]\times\R^d;\R^d)$. The following result, whose proof is reported in the Appendix, shows that whenever $K$ and $f$ belong to the class $\Adm_{\ell}$, a solution $\rho$ of system \eqref{eq:compacteq} exists, is unique and is uniformly continuous in time: remarkably, the modulus of continuity depends only on $\rho_0$, $T$ and $\ell$.

\begin{theorem}\label{th:bound}
	Fix $T > 0$, $\rho_0 \in \mathcal{P}_c(\R^d)$ and $\ell \in L^1(0,T)$. If $K,f \in \Adm_{\ell}$, then there exists a unique solution $\rho \in \mathcal{C}([0,T];\mathcal{P}_1(\R^d))$ of system \eqref{eq:compacteq}. Furthermore, there exist $R,L > 0$ depending only on $\rho_0$, $T$ and $\ell$ such that
	\begin{itemize}
		\item $\supp(\rho(t)) \subseteq B(0,R)$ for every $t \in [0,T]$;
		\item $\rho$ is uniformly continuous with modulus of continuity $\omega(t,s) = \ds L\int^t_s\ell(\theta)d\theta.$
	\end{itemize}
\end{theorem}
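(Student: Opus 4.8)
The plan is to run a standard fixed-point / a priori estimate argument for the characteristic flow associated to the vector field $v(t,x)=(K*\rho)(t,x)+f(t,x)$, exploiting that $v$ itself lies in $\Adm_{\ell'}$ for a suitable $\ell'$ once we control $\rho$. First I would observe that if $\rho(t)\in\mathcal{P}(\R^d)$, then the map $x\mapsto (K*\rho)(t,x)=\int K(t,x-y)\,d\rho(t,y)$ inherits Lipschitz continuity in $x$ from property $(ii)$ of Definition \ref{def:adm} (with constant $\ell(t)$, uniformly in $\rho$), and a linear growth bound from $(iii)$ of the form $|(K*\rho)(t,x)|\le \ell(t)(1+|x|+m_1(\rho(t)))$, where $m_1(\rho(t))=\int|y|\,d\rho(t,y)$ is the first moment. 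So the total field $v(t,\cdot)=K*\rho(t,\cdot)+f(t,\cdot)$ is Carath\'eodory, Lipschitz in $x$ with constant $2\ell(t)$, and has at most linear growth governed by $\ell$ and $m_1(\rho(t))$.

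Next I would set up the classical reformulation: for a field $v\in\Adm$ one defines the flow map $\Phi^v_{t}$ solving $\dot X(t)=v(t,X(t))$, $X(0)=x$, which is well-defined and bi-Lipschitz by Cauchy--Lipschitz since $\ell\in L^1$, and then $\rho(t):=(\Phi^v_t)_\#\rho_0$ is the unique solution of the continuity equation in the sense of Definition \ref{defsolution} (this is the standard superposition/representation result, e.g. from \cite{AGS}). The well-posedness of system \eqref{eq:compacteq} then becomes a fixed-point problem: find $\rho$ such that $\rho(t)=(\Phi^{v[\rho]}_t)_\#\rho_0$ with $v[\rho]=K*\rho+f$. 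I would carry this out by Banach fixed point on the space $\mathcal{C}([0,T];\mathcal{P}_1(B(0,R)))$ with the sup-in-time $\W_1$ metric (or on a short time interval and then iterate using that $\ell\in L^1$ gives no blow-up), estimating $\W_1(\rho_1(t),\rho_2(t))$ via a Gr\"onwall argument: transporting along the two flows and using the Lipschitz bound on $v$ together with $\W_1(K*\rho_1(t,\cdot)\cdot,\,K*\rho_2(t,\cdot)\cdot)\le \ell(t)\,\W_1(\rho_1(t),\rho_2(t))$ yields a contraction once $\int_0^T \ell <$ (threshold), and the general case follows by concatenation. Boundary conditions are handled by the remark in the excerpt: since $\supp(\rho_0)\subseteq\mathring\O$ and the reflecting condition $v\cdot n=0$ on $\partial\O$ keeps mass inside, and in fact the growth bound confines $\supp(\rho(t))$ inside a ball $B(0,R)$ with $R$ depending only on $\diam(\supp\rho_0)$, $T$, $\ell$ via a Gr\"onwall estimate $\tfrac{d}{dt}\sup_{x\in\supp\rho(t)}|x| \le \ell(t)(1+\sup|x|+m_1)$ and $m_1\le \sup|x|$.

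For the modulus of continuity I would directly estimate, for $s\le t$,
\[
\W_1(\rho(t),\rho(s))\le \int_{\R^d}|\Phi^v_t(x)-\Phi^v_s(x)|\,d\rho_0(x)\le \int_s^t\!\!\int_{\R^d}|v(\theta,\Phi^v_\theta(x))|\,d\rho_0(x)\,d\theta,
\]
and since $\Phi^v_\theta(x)\in B(0,R)$ for $x\in\supp\rho_0$, the integrand is bounded by $|v(\theta,\Phi^v_\theta(x))|\le \ell(\theta)(1+R+m_1(\rho(\theta)))\le \ell(\theta)(1+2R)=:L\,\ell(\theta)$, giving $\W_1(\rho(t),\rho(s))\le L\int_s^t\ell(\theta)\,d\theta=\omega(t,s)$ as claimed, with $L$ depending only on $\rho_0$, $T$, $\ell$ through $R$. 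Uniqueness follows from the contraction estimate above. The main obstacle I expect is not any single deep step but bookkeeping the constants so that $R$ and $L$ genuinely depend only on $\rho_0$, $T$, $\ell$ and not on the particular $K,f$ beyond their membership in $\Adm_\ell$ — i.e., making the Gr\"onwall constants explicit and checking that the self-interaction term $K*\rho$ does not introduce any further dependence; a secondary technical point is justifying the representation formula $\rho(t)=(\Phi^v_t)_\#\rho_0$ and the reflecting boundary condition rigorously on the non-convex domain $\O$, which is why the authors defer the full proof to the Appendix.
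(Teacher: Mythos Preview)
Your plan is sound, but it follows a genuinely different route from the paper. The Appendix proves Theorem~\ref{th:bound} by a \emph{particle approximation / mean-field limit}: it first solves the discrete ODE system \eqref{discreteeq} for $N$ particles (Lemma~\ref{le:discrunif}), obtains the bounds $R$ and $L=2+3R$ at that level uniformly in $N$, writes the solution as an empirical measure (Proposition~\ref{p-rewritten}), and then passes to the limit $N\to\infty$ via Ascoli--Arzel\`a in $\mathcal{C}([0,T];\mathcal{P}_1(B(0,R)))$; uniqueness is imported from \cite{CanCarRos10}. You instead work directly at the continuum level: build the characteristic flow $\Phi^{v[\rho]}_t$, use the push-forward representation $\rho(t)=(\Phi^{v[\rho]}_t)_\#\rho_0$, and close a Banach fixed point via a Gr\"onwall/contraction estimate on $\W_1$. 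Your approach is more self-contained (no external uniqueness citation, no compactness extraction) and arguably cleaner for the constants; the paper's approach has the advantage of making the link with the discrete agent model explicit, which matters for the modeling narrative and is reused later. One minor slip: in your bound on $|v|$ you only accounted for one of the two summands---with both $K*\rho$ and $f$ you get $|v(t,x)|\le \ell(t)\bigl(2+2|x|+m_1(\rho(t))\bigr)$, hence $L=2+3R$ rather than $1+2R$, matching \eqref{eq:unifmodulus}.
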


\begin{remark}\label{fromnowon}
In what follows, for the sake of simplicity, we assume that the function $\ell$ is in $L^{\infty}(0,T)$, so that Theorem \ref{th:bound} applies and $\rho$ turns out to be Lipschitz continuous with a constant $L = (2+3R)\|\ell\|_{L^{\infty}(0,T)}$ (see equation \eqref{eq:unifmodulus} in the Appendix). The more general case $\ell\in L^{1}(0,T)$ would provide $\rho\in W^{1,1}([0,T];\mathcal{P}_1(B(0,R)))$ in the sense of \cite{AGS}, and all the results below follow along the same lines. This assumption helps us to keep the notation compact without any loss of generality.
\end{remark}

\section{The variational problem}\label{svariat}


We now pass to study how to control the behavior of $\rho$ by means of another mass of individuals $\nu$ -- representing, for instance, the officers and stewards of a building to be evacuated -- whose evolution is obtained by the minimization of a suitable given cost functional $\mathcal{J}$. Formally, this means coupling the dynamics of $\rho$ with $\nu$ through an interaction kernel $H \in \Adm_{\ell}$ as follows
\begin{equation}\label{eq:newconteq}
\begin{cases}
\ds\frac{\partial\rho}{\partial t}(t)=-\dive_x\Big(\big((K*\rho)(t)+(H*\nu)(t)\big)\rho(t)\Big)&\text{ for }t\in(0,T],\\
\rho(0)=\rho_0.&
\end{cases}
\end{equation}
Notice that system \eqref{eq:newconteq} is again of the form of system \eqref{eq:compacteq} with velocity field
$$v(t,x):=(K*\rho)(t,x)+(H*\nu)(t,x),$$
hence of the same nature of \eqref{ourvel}. This implies, by Theorem \ref{th:bound} and Remark \ref{fromnowon}, that the solutions of \eqref{eq:newconteq} are Lipschitz curves with a Lipschitz constant $L$ and values in $\mathcal{P}_1(\R^d)$, whose support is uniformly bounded in time inside $\O$, i.e., they belong to the class
$$\Lip_L([0,T];\mathcal{P}_1(\O))=\big\{\rho\in\mathcal{C}([0,T];\mathcal{P}_1(\O))\;:\;\W_1(\rho(t),\rho(s))\le L|t-s|\ \forall t,s\in[0,T]\big\}.$$

We now make the assumption that, similarly to $\rho$, also the control mass $\nu$ has a characteristic limit speed (or acceleration) $L'>0$. We thus prescribe $\nu$ to belong to the class
\begin{align*}
\Lip_{L'}([0,T];\M_M(\O))=\big\{\nu\in\mathcal{C}([0,T];&\M_M(\O))\;:\\
&d(\nu(t),\nu(s))\le L'|t-s|\ \forall t,s\in[0,T]\big\},
\end{align*}
where $d$ is a metric on $\M_M(\O)$ equivalent to the weak* topology. The modeling reason for considering $\M_M(\O)$ as the space where the curve $\nu$ takes its values is that we want to allow the mass of $\nu$ to change over the time, up to a maximal mass $M$ (in the interpretation of $\nu$ as the probability distribution of stewards, we would like to change their number as the needs come).

The dynamics of $\nu$ is given by the minimization of a cost functional $\mathcal{J}$, encoding a certain goal that $\rho$ and $\nu$ have to reach, subject to system \eqref{eq:newconteq}, which prescribes the evolution of $\rho$. We assume that the cost functional
$$\mathcal{J}:\Lip_{L'}([0,T];\M_M(\O))\times\Lip_L([0,T];\mathcal{P}_1(\O))\to\R\cup\{+\infty\},$$
that we optimize in our control problem, satisfies the following assumptions:
\begin{description}
\item[(J1)] $\mathcal{J}$ is bounded from below;
\item[(J2)] $\mathcal{J}$ is lower semicontinuous with respect to the pointwise (in time) weak* convergence of measures, i.e., for any $(\nu_n,\rho_n)_{n\in\N}\subset \Lip_{L'}([0,T];\M_M(\O))\times\Lip_L([0,T];\mathcal{P}_1(\O))$ such that $(\nu_n(t),\rho_n(t))\to(\nu(t),\rho(t))$ weakly* for every $t\in[0,T]$, it holds
$$\mathcal{J}(\nu,\rho)\le\liminf_{n\to\infty}\mathcal{J}(\nu_n,\rho_n).$$
\end{description}

Some examples of interesting cost functionals $\mathcal{J}$ satisfying (J1) and (J2) are listed in Section \ref{exj}. We can now state the optimal control problem we study henceforth.
\textbf{
\begin{problem}\label{prob1}
\textnormal{
Given $\rho_0\in\mathcal{P}_c(\R^d)$ and $K,H\in\Adm_{\ell}$, solve
$$\min\Big\{\mathcal{J}(\nu,\rho)\;:\;(\nu,\rho)\in\Lip_{L'}([0,T];\M_M(\O))\times\Lip_L([0,T];\mathcal{P}_1(\O))\Big\}$$
subject to the state equation \eqref{eq:newconteq}.}
\end{problem}}

It is straightforward to see that Problem \ref{prob1} can then be rewritten as
\begin{equation}\label{trueprob1}
\begin{split}
\min\Big\{\mathcal{J}(\nu,\rho)&+\chi_A(\nu,\rho)\;:\\
&\;(\nu,\rho)\in\Lip_{L'}([0,T];\M_M(\O))\times\Lip_L([0,T];\mathcal{P}_1(\O))\Big\}
\end{split}
\end{equation}
where $\chi_A$ is the characteristic function (with value $0$ on $A$ and $+\infty$ elsewhere) of the set
\[\begin{split}
A=\Big\{(\nu,\rho):[0,T]\to&\M_M(\O)\times\mathcal{P}_1(\O)\;:\;\rho(0)=\rho_0\text{ and}\\
&\ds\frac{\partial\rho}{\partial t}(t)=-\dive_x\Big(\big((K*\rho)(t)+(H*\nu)(t)\big)\rho(t)\Big)\text{ for }t\in (0,T]\Big\}.
\end{split}\]

\begin{lemma}\label{le:eqclosed}
The set $A$ is closed under the topology of pointwise weak* convergence of measures. Therefore, $\chi_A:\Lip_{L'}([0,T];\M_M(\O))\times\Lip_{L}([0,T];\mathcal{P}_1(\O))\to\R\cup\{+\infty\}$ also satisfies the assumption (J2).
\end{lemma}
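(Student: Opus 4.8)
The plan is to take a sequence $(\nu_n,\rho_n)_{n\in\N}\subset A$ with $(\nu_n(t),\rho_n(t))\to(\nu(t),\rho(t))$ weakly* for every $t\in[0,T]$ and to check that the limit $(\nu,\rho)$ still satisfies the two properties defining $A$, namely the initial condition $\rho(0)=\rho_0$ and the weak formulation of \eqref{eq:newconteq} in the sense of Definition \ref{defsolution}. The initial condition is immediate, since $\rho_n(0)=\rho_0$ for all $n$ forces $\rho(0)=\rho_0$. I would also record at the outset that the limit stays in the ambient classes: on the bounded sets $\mathcal{P}_1(\O)$ and $\M_M(\O)$ the distances $\W_1$ and $d$ are continuous for weak* convergence, so the Lipschitz bounds pass to the limit; moreover, by Theorem \ref{th:bound} each $\rho_n(t)$ -- and hence $\rho(t)$ -- is supported in a fixed ball $B(0,R)\subset\O$, so all the integrals below take place over one fixed compact set, with bounds uniform in $n$.

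The core of the argument is passing to the limit in the weak formulation. I would fix $\phi\in\mathcal{C}_0^\infty([0,T];\mathcal{C}_b^\infty(\O))$ and rewrite the two bilinear terms coming from $v_n:=(K*\rho_n)+(H*\nu_n)$ as integrals against product measures,
$$\int_\O(K*\rho_n)(t,x)\cdot\nabla\phi(t,x)\,d\rho_n(t,x)=\int_{\O\times\O}K(t,x-y)\cdot\nabla\phi(t,x)\,d\big(\rho_n(t)\otimes\rho_n(t)\big)(x,y),$$
and analogously for the $H$-term with $\rho_n(t)\otimes\nu_n(t)$ in place of $\rho_n(t)\otimes\rho_n(t)$. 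Then the weak-formulation identity for $(\nu_n,\rho_n)$,
$$\int_0^T\int_\O\Big(\partial_t\phi+(K*\rho_n)\cdot\nabla\phi+(H*\nu_n)\cdot\nabla\phi\Big)\,d\rho_n(t)\,dt=0,$$
involves only the measures $\rho_n(t)$, $\rho_n(t)\otimes\rho_n(t)$ and $\rho_n(t)\otimes\nu_n(t)$, tested against the functions $\partial_t\phi(t,\cdot)$, $(x,y)\mapsto K(t,x-y)\cdot\nabla\phi(t,x)$ and $(x,y)\mapsto H(t,x-y)\cdot\nabla\phi(t,x)$, which are continuous and bounded on the relevant fixed compact sets for a.e.\ $t$ (continuity and boundedness of $K,H$ come from $K,H\in\Adm_{\ell}$, those of $\nabla\phi$ from the choice of $\phi$).

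The key technical step I would isolate is the following: if $\mu_n\to\mu$ and $\sigma_n\to\sigma$ weakly* with uniformly bounded masses and supports in a fixed compact set, then $\mu_n\otimes\sigma_n\to\mu\otimes\sigma$ weakly*. I expect to prove it by fixing $\Psi\in\mathcal{C}_b(\O\times\O)$, which is uniformly continuous there, writing $\int\Psi\,d(\mu_n\otimes\sigma_n)=\int\big(\int\Psi(x,y)\,d\sigma_n(y)\big)\,d\mu_n(x)$, observing that the partial integrals $x\mapsto\int\Psi(x,y)\,d\sigma_n(y)$ form an equibounded and equicontinuous family converging pointwise -- hence, by Arzel\`a--Ascoli, uniformly -- to $x\mapsto\int\Psi(x,y)\,d\sigma(y)$, and then invoking $\mu_n\to\mu$. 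Applied pointwise in $t$, with $\sigma_n=\rho_n(t)$ and with $\sigma_n=\nu_n(t)$ respectively, this yields convergence of each of the above integrands in $t$; since they are all dominated by $C\,\ell(t)$ -- with $C$ depending only on $\diam(\O)$, $\|\nabla\phi\|_\infty$ and $M$ -- which is integrable on $(0,T)$, dominated convergence in $t$ (and the analogous, simpler argument for the $\partial_t\phi$ term, where $\rho_n(t)\to\rho(t)$ weakly* is used directly) lets me pass to the limit and conclude that $(\nu,\rho)$ solves the weak formulation with velocity $v=(K*\rho)+(H*\nu)$, i.e.\ $(\nu,\rho)\in A$. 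The assertion about $\chi_A$ then follows formally: if $\liminf_n\chi_A(\nu_n,\rho_n)<+\infty$, a subsequence lies in $A$, hence so does the limit by what precedes, and thus $\chi_A(\nu,\rho)=0\le\liminf_n\chi_A(\nu_n,\rho_n)$.

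I expect the only genuine obstacle to be the product-measure convergence lemma, together with the bookkeeping that $K$, $H$ and $\phi$ carry exactly the mild regularity needed to apply it and to dominate the time integrals; everything else is routine and is made uniform by the fixed compact support furnished by Theorem \ref{th:bound}. (Alternatively, one could feed $v_n\to v$ and $\rho_n\to\rho$ into \eqref{compmu} of Theorem \ref{thm:3} after a time-discretization of $\phi$, but the product-measure route handles the time dependence of $\phi$ more directly.)
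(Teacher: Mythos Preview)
Your proof is correct and shares the paper's overall architecture: verify $\rho(0)=\rho_0$ trivially, then pass to the limit term by term in the weak formulation, closing the time integral by dominated convergence with an $L^1$ bound of the form $C\ell(t)$. The difference lies in how the bilinear convolution terms are handled. The paper works directly with the convolution: it notes that $\nabla\phi(t,x)\cdot(H*\nu_n)(t,x)\to\nabla\phi(t,x)\cdot(H*\nu)(t,x)$ for every $(t,x)$ by continuity of $H(t,\cdot)$ and weak* convergence, records the uniform bound $M\ell(t)(1+\delta(\O))\|\nabla\phi\|_\infty$, and then invokes ``the dominated convergence theorem and the compact support of the measures'' to conclude --- leaving implicit the uniform-in-$x$ convergence of $(H*\nu_n)(t,\cdot)$ (which follows from the equi-Lipschitz constant $M\ell(t)$) needed to cope with the simultaneously varying $\rho_n(t)$. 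You instead rewrite these terms as integrals against the product measures $\rho_n(t)\otimes\rho_n(t)$ and $\rho_n(t)\otimes\nu_n(t)$ and isolate a clean lemma $\mu_n\otimes\sigma_n\rightharpoonup^*\mu\otimes\sigma$, proved via Arzel\`a--Ascoli on the partial integrals. Your route makes the passage to the limit fully explicit where the paper is terse, at the modest cost of the extra lemma; the paper's route is shorter but requires the reader to supply the equi-Lipschitz/uniform-convergence step.
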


\begin{proof}
Take $(\nu_n,\rho_n)_{n\in\N}\subset A$ such that $(\nu_n(t),\rho_n(t))\to(\nu(t),\rho(t))$ weakly* for every $t\in[0,T]$. By Definition \ref{defsolution}, to prove that $(\nu,\rho)\in A$ we have to show that $\rho(0)=\rho_0$ and for every $\phi\in\mathcal{C}^\infty_0([0,T];\mathcal{C}^\infty_b(\O))$
$$\int^T_0\int_\O\Big(\frac{\partial\phi}{\partial t}(t,x)+\big((K*\rho)(t,x)+(H*\nu)(t,x)\big)\cdot\nabla\phi(t,x)\Big)\,d\rho(t,x)\,dt=0.$$
The fact that $\rho(0)=\rho_0$ simply follows from the assumption that $\rho_n(0)=\rho_0$ for every $n\in\N$ and the uniqueness of the weak* limit. Since $(\nu_n,\rho_n)\in A$ we have that for every $\phi\in\mathcal{C}^\infty_0([0,T];\mathcal{C}^\infty_b(\O))$
$$\int^T_0\int_\O\Big(\frac{\partial\phi}{\partial t}(t,x)+\big((K*\rho_n)(t,x)+(H*\nu_n)(t,x)\big)\cdot\nabla\phi(t,x)\Big)\,d\rho_n(t,x)\,dt=0.$$
Hence, by the weak* convergence, the regularity of the test functions and the dominated convergence theorem, we obtain
$$\lim_{n\to\infty}\int^T_0\int_\O\frac{\partial\phi}{\partial t}(t,x)\,d\rho_n(t,x)\,dt=\int^T_0\int_\O\frac{\partial\phi}{\partial t}(t,x)\,d\rho(t,x)\,dt.$$
For the same reasons, and the continuity of $H(t,\cdot)$, we have
$$\lim_{n\to\infty}\nabla\phi(t,x)\cdot\int_\O H(t,x-y)\,d\nu_n(t,y)=\nabla\phi(t,x)\cdot\int_\O H(t,x-y)\,d\nu(t,y)$$
for every $(t,x)\in[0,T]\times\O$, while its admissibility and the uniform compact support of the measures gives us the upper bound
\begin{equation}
\label{eq:boundinL1}
\left|\nabla\phi(t,x)\cdot\int_\O H(t,x-y)\,d\nu_n(t,y)\right|\le M\ell(t)\big(1+\delta(\O)\big)\sup_{(t,x)\in[0,T]\times\O}|\nabla\phi(t,x)|,
\end{equation}
where we have set
\begin{equation}
\label{eq:deltaB}
\delta(\Omega)=\sup\big\{|x|\ :\ x\in \O\big\}.
\end{equation}
Notice that the bound \eqref{eq:boundinL1} belongs to $L^1(0,T)$ (notice that this also holds if $\ell$ simply belongs to $L^1(0,T)$), and that the same holds true with $K$ in place of $H$, $\rho_n$ in place of $\nu_n$ and $\rho$ in place of $\nu$. By the dominated convergence theorem and the compact support of the measures, we obtain finally
\[\begin{split}
\lim_{n\to\infty}\int^T_0\int_\O&\big((K*\rho_n)(t,x)+(H*\nu_n)(t,x)\big)\cdot\nabla \phi(t,x)\,d\rho_n(t,x)\,dt\\
&=\int^T_0\int_\O\big((K*\rho)(t,x)+(H*\nu)(t,x)\big)\cdot\nabla\phi(t,x)\,d\rho(t,x) \,dt,
\end{split}\]
which concludes the proof.
\end{proof}

The compactness of the set $\Lip_L([0,T];\mathcal{P}_1(\O))$, where $\mathcal{P}_1(\O)$ is endowed with the $\W_1$ metric, was already discussed in the proof of Theorem \ref{th:bound}. The following result shows that also the set $\Lip_{L'}([0,T];\M_M(\O))$, where $\M_M(\O)$ is equipped with the metric of the weak* convergence, is compact.

\begin{lemma}\label{compactM}
Consider $\M_M(\O)$ equipped with the metric of weak* convergence. Then, the set $\Lip_{L'}([0,T];\M_M(\O))$ is compact with respect to the uniform convergence.
\end{lemma}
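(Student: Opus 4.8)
The plan is to establish compactness of $\Lip_{L'}([0,T];\M_M(\O))$ via an Arzelà–Ascoli type argument. Since $\M_M(\O)$ equipped with the metric $d$ of weak* convergence is a compact metric space (it is weak* closed and bounded in total variation, hence weak* compact by Banach–Alaoglu, and $d$ metrizes this topology because $\O$ is separable), the family of curves in question is equi-Lipschitz with values in a fixed compact metric space, so the classical Arzelà–Ascoli theorem in the form for maps into metric spaces applies directly.

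Concretely, I would proceed as follows. First I would recall that $(\M_M(\O),d)$ is a compact metric space: closedness under weak* limits of the mass constraint $\nu(\O)\le M$ follows by testing against the constant function $1$, which is admissible since $\O$ is bounded (hence $\overline\O$ compact), and total boundedness plus completeness follow from weak* compactness together with metrizability. Second, given any sequence $(\nu_n)_{n\in\N}\subset\Lip_{L'}([0,T];\M_M(\O))$, I would note that for each fixed $t$ the set $\{\nu_n(t)\}_n$ is relatively compact in $(\M_M(\O),d)$, and that the $L'$-Lipschitz bound $d(\nu_n(t),\nu_n(s))\le L'|t-s|$ is a uniform (in $n$) equicontinuity estimate. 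By the Arzelà–Ascoli theorem for functions from the compact interval $[0,T]$ into the compact metric space $(\M_M(\O),d)$, there is a subsequence $(\nu_{n_k})_k$ converging uniformly to some $\nu:[0,T]\to\M_M(\O)$. Third, I would check that the limit $\nu$ inherits the $L'$-Lipschitz property: for fixed $t,s$ one has $d(\nu(t),\nu(s))=\lim_k d(\nu_{n_k}(t),\nu_{n_k}(s))\le L'|t-s|$ by continuity of the metric, so $\nu\in\Lip_{L'}([0,T];\M_M(\O))$. This shows the set is sequentially compact, hence compact since we are in a metric setting (uniform convergence on $[0,T]$ into a compact metric space is itself metrizable, e.g. by $\sup_{t}d(\cdot(t),\cdot(t))$).

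The only mild subtlety — and the step I would be most careful about — is the invocation of Arzelà–Ascoli in the metric-space-valued form: one needs the target to be complete (or at least that the limit lands back in $\M_M(\O)$), which is exactly why establishing that $(\M_M(\O),d)$ is compact, and in particular complete, at the outset is the crucial preliminary. Everything else is a routine application of the classical theorem; there is no PDE or measure-theoretic obstruction here since the mass bound is preserved under weak* limits and the Lipschitz bound passes to the limit by continuity of $d$. I would therefore spend the bulk of the write-up on the compactness of $(\M_M(\O),d)$ and treat the Arzelà–Ascoli application as essentially immediate.
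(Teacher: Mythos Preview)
Your proposal is correct and follows essentially the same approach as the paper: both arguments establish compactness of $(\M_M(\O),d)$ via Banach--Alaoglu and then apply the Ascoli--Arzel\`a theorem for metric-space-valued functions, using the uniform $L'$-Lipschitz bound for equicontinuity and compactness of the target for pointwise relative compactness. Your write-up is in fact slightly more thorough, since you explicitly verify that the limit curve inherits the $L'$-Lipschitz property, a step the paper leaves implicit.
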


\begin{proof}
Without loss of generality, assume $M=1$. Notice that for a positive measure $\mu\in\M(\O)$ its total variation $|\mu|$ coincides with $\mu(\O)$ itself; hence, the set $\M_1(\O)$ coincides with the closed unit ball $\{\mu\in\M(\O):\mu(\O)\le1\}$, which is compact in the weak* topology from the Banach-Alaoglu Theorem. Therefore, consider a sequence $(\nu_n)_{n\in\N}\subset\Lip_{L'}([0,T];\M_M(\O))$. Similarly to the proof of Theorem \ref{th:bound}, we have that
\begin{itemize}
\item $(\nu_n)_{n\in\N}$ is equicontinuous and is contained in a closed subset of the set $\mathcal{C}([0,T];\M_M(\O))$, because of the uniform bound on the Lipschitz constant;
\item for every $t\in[0,T]$, the sequence $(\nu_n(t))_{n\in\N}$ is relatively compact in $\M_M(\O)$ equipped with the weak* topology, since this metric space is compact.
\end{itemize}
Hence, an application of the Ascoli-Arzel\`a Theorem for functions with values in a metric space concludes the proof.
\end{proof}

We are now ready to address the well-posedness of Problem \ref{prob1}.

\begin{theorem}\label{th:mainmin}
Problem \ref{prob1} admits a solution.
\end{theorem}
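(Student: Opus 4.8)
The plan is to apply the direct method of the calculus of variations to the reformulation \eqref{trueprob1} of Problem \ref{prob1}, namely the minimization of $\mathcal{J} + \chi_A$ over the product space $X := \Lip_{L'}([0,T];\M_M(\O)) \times \Lip_L([0,T];\mathcal{P}_1(\O))$. First I would note that by (J1) the functional $\mathcal{J}$ is bounded from below, and $\chi_A \geq 0$, so $\inf (\mathcal{J}+\chi_A)$ is finite provided it is not $+\infty$ identically; this last point follows because for any admissible curve $\nu \in \Lip_{L'}([0,T];\M_M(\O))$ (e.g. a constant one), Theorem \ref{th:bound} and Remark \ref{fromnowon} guarantee the existence of a solution $\rho$ of the state equation \eqref{eq:newconteq} lying in $\Lip_L([0,T];\mathcal{P}_1(\O))$, so the pair $(\nu,\rho)$ belongs to $A$ and $\mathcal{J}(\nu,\rho) < +\infty$ unless $\mathcal{J}$ is identically $+\infty$ (in which case the statement is vacuous). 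Hence we may pick a minimizing sequence $(\nu_n,\rho_n)_{n\in\N} \subset X$ with $(\nu_n,\rho_n) \in A$ and $\mathcal{J}(\nu_n,\rho_n) \to \inf(\mathcal{J}+\chi_A)$.

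Next I would extract a convergent subsequence. By Lemma \ref{compactM}, the set $\Lip_{L'}([0,T];\M_M(\O))$ is compact for uniform convergence, so along a subsequence $\nu_{n_k} \to \nu$ uniformly, and in particular $\nu_{n_k}(t) \to \nu(t)$ weakly* for every $t \in [0,T]$. Independently, the set $\Lip_L([0,T];\mathcal{P}_1(\O))$ with the $\W_1$-metric on $\mathcal{P}_1(\O)$ is compact — as recalled just before Lemma \ref{compactM}, this was established in the proof of Theorem \ref{th:bound} via Ascoli--Arzel\`a (equicontinuity from the uniform Lipschitz bound, plus compactness of $\mathcal{P}_1(\O)$ in $\W_1$ since $\O$ is bounded) — so passing to a further subsequence we obtain $\rho_{n_k} \to \rho$ uniformly in $\W_1$; since $\W_1$-convergence on $\mathcal{P}_1(\O)$ with $\O$ bounded implies weak* convergence, we have $\rho_{n_k}(t) \to \rho(t)$ weakly* for every $t$ as well. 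Thus $(\nu_{n_k},\rho_{n_k}) \to (\nu,\rho)$ in the topology of pointwise weak* convergence, with limit in $X$.

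Finally I would close the argument by lower semicontinuity. By Lemma \ref{le:eqclosed}, the set $A$ is closed under pointwise weak* convergence, hence $(\nu,\rho) \in A$, i.e. $\chi_A(\nu,\rho) = 0$; equivalently, $\rho$ is the solution of the state equation driven by $\nu$. By assumption (J2), $\mathcal{J}$ is lower semicontinuous along this convergence, so
$$\mathcal{J}(\nu,\rho) + \chi_A(\nu,\rho) = \mathcal{J}(\nu,\rho) \leq \liminf_{k\to\infty}\mathcal{J}(\nu_{n_k},\rho_{n_k}) = \inf(\mathcal{J}+\chi_A).$$
Therefore $(\nu,\rho)$ is a minimizer, which is exactly a solution of Problem \ref{prob1}.

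There is no serious obstacle: all the heavy lifting — existence/uniqueness and uniform regularity of the state (Theorem \ref{th:bound}), compactness of the control set (Lemma \ref{compactM}), compactness of the state set, and closedness of the constraint set $A$ (Lemma \ref{le:eqclosed}) — has already been done, and the hypotheses (J1)--(J2) are tailored precisely to make the direct method go through. The only mild subtlety to articulate carefully is the compatibility of the two notions of convergence (uniform convergence of $\nu_n$ versus uniform $\W_1$-convergence of $\rho_n$, both feeding into the single hypothesis of \emph{pointwise weak* convergence} used in (J2) and in Lemma \ref{le:eqclosed}), together with the non-vacuousness remark ensuring the infimum is a genuine real number rather than $+\infty$.
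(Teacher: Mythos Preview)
Your proof is correct and follows essentially the same route as the paper: the direct method applied to the reformulation \eqref{trueprob1}, using (J1) for boundedness below, Lemma \ref{compactM} (together with the Ascoli--Arzel\`a compactness of $\Lip_L([0,T];\mathcal{P}_1(\O))$ from the proof of Theorem \ref{th:bound}) to extract a pointwise weak* convergent subsequence, and then Lemma \ref{le:eqclosed} plus (J2) for lower semicontinuity. Your version is slightly more explicit in separating the compactness arguments for the $\nu$- and $\rho$-components and in checking that the infimum is finite, but the structure and the key ingredients are identical to the paper's.
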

\begin{proof}
We prove the statement by means of the direct methods in the Calculus of Variations. Rewrite Problem \ref{prob1} in the form \eqref{trueprob1} and notice that from the hypothesis (J1) the functional $\mathcal{J}+\chi_A$ is bounded from below. We can thus consider a minimizing sequence $(\nu_n,\rho_n)\in\Lip_{L'}([0,T];\M_M(\O))\times\Lip_L([0,T];\mathcal{P}_1(\O))$, which by Lemma \ref{compactM} admits a subsequence uniformly (and thus pointwise) converging to some $(\nu,\rho)\in\Lip_L([0,T];\M_M(\O))\times\Lip_{L'}([0,T];\mathcal{P}_1(\O))$. By Lemma \ref{le:eqclosed} and hypothesis (J2), the functional $\mathcal{J}+\chi_A$ is lower semicontinuous with respect to the pointwise weak* convergence, and this concludes the proof.
\end{proof}

For  use  below,  in  particular  in  Section  \ref{fornasolo},  we  mention  the  following remark.

\begin{remark}\label{rem:extraterm}
In several applications, the cost functional $\mathcal{J}$ as well as the PDE constraint \eqref{eq:newconteq} may depend on some extra term $f\in\mathcal{X}$, where $\mathcal{X}$ is a function space with topology $\tau$. Whenever it is possible to rewrite the problem as
$$\min\Big\{\mathcal{F}(\nu,\rho,f)\;:\;(\nu,\rho,f)\in\Lip_{L'}([0,T];\M_M(\O))\times\Lip_L([0,T];\mathcal{P}_1(\O))\times\mathcal{X}\Big\}$$
for a certain cost functional $\mathcal{F}$, then Theorem \ref{th:mainmin} is still valid provided that
\begin{itemize}
\item $\mathcal{X}$ is compact with respect to the topology $\tau$;
\item for every $(\rho,\nu)$, the functional $\mathcal{F}(\rho,\nu,\cdot)$ is lower semicontinuous with respect to the topology $\tau$. Note that in this formulation the state equation is included in the functional $\mathcal{F}$ as done in \eqref{trueprob1}.
\end{itemize}
\end{remark}

\section{The cost functional $\mathcal{J}$}\label{exj}

In this section we show some examples of cost functionals $\mathcal{J}$ appearing in Problem \ref{prob1}. Clearly, any linear combination of the following terms is still a valid functional for which Theorem \ref{th:mainmin} applies. We start with a preliminary result on lower semicontinuous functionals defined on $\M_M(\O)$ (see for instance \cite{bu89} or Lemma 1.6 of \cite{santambrogiooptimal}).

\begin{proposition}\label{prop:intlsc}
Let $X$ be a metric space and let $f:X\to\R\cup\{+\infty\}$ be a lower semicontinuous function bounded from below. Then the functional $\mathcal{J}:\M_M(X)\to\R\cup\{+\infty\}$ defined by
$$\mathcal{J}(\mu)=\int_X f(x)\,d\mu(x)\qquad\text{for every }\mu\in\M_M(X)$$
is lower semicontinuous with respect to the weak* convergence of measures.
\end{proposition}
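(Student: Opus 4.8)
The plan is to use the standard technique for proving lower semicontinuity of integral functionals with respect to weak* convergence, namely approximating the (possibly non-continuous) integrand $f$ from below by a monotone sequence of bounded continuous functions. First I would reduce to the case $f \geq 0$: since $f$ is bounded from below by some constant $c \in \R$, write $f = (f - c) + c$, so that $\mathcal{J}(\mu) = \int_X (f(x)-c)\,d\mu(x) + c\,\mu(X)$; the map $\mu \mapsto \mu(X)$ is weak* continuous on $\M_M(X)$ (it is pairing against the constant function $1 \in \mathcal{C}_b(X)$), hence it suffices to treat the nonnegative integrand $f - c$. So without loss of generality assume $f : X \to [0,+\infty]$ is lower semicontinuous.

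The key step is the representation of a nonnegative lower semicontinuous function as the pointwise supremum of an increasing sequence of bounded Lipschitz (hence continuous and bounded) functions. Concretely, for each $k \in \N$ set
$$f_k(x) = \inf_{y \in X}\big\{\min(f(y),k) + k\, d_X(x,y)\big\},$$
the Moreau–Yosida / inf-convolution regularization of the truncation $\min(f,k)$. Each $f_k$ is $k$-Lipschitz, bounded (between $0$ and $k$), and the sequence $(f_k)_k$ is nondecreasing; moreover, by lower semicontinuity of $f$ one checks the pointwise limit $\sup_k f_k(x) = f(x)$ for every $x \in X$ (this is a classical fact, but I would include the short verification: $f_k(x) \leq \min(f(x),k) \leq f(x)$ gives one inequality, and for the other, if $f_k(x) < \lambda < f(x)$ along a subsequence then one extracts near-optimal $y_k \to x$ with $\min(f(y_k),k) < \lambda$, contradicting lower semicontinuity and $k \to \infty$). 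By the monotone convergence theorem,
$$\mathcal{J}(\mu) = \int_X f\,d\mu = \sup_{k \in \N} \int_X f_k\,d\mu \qquad \text{for every } \mu \in \M_M(X).$$

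Now lower semicontinuity follows from a general principle: a pointwise supremum of a family of weak*-continuous functionals is weak*-lower semicontinuous. For each fixed $k$, the functional $\mu \mapsto \int_X f_k\,d\mu$ is continuous with respect to weak* convergence on $\M_M(X)$ precisely because $f_k \in \mathcal{C}_b(X)$ and weak* convergence $\mu_n \rightharpoonup^* \mu$ means exactly $\int_X \varphi\,d\mu_n \to \int_X \varphi\,d\mu$ for all $\varphi \in \mathcal{C}_b(X)$. Hence if $\mu_n \rightharpoonup^* \mu$ in $\M_M(X)$, then for every $k$,
$$\int_X f_k\,d\mu = \lim_{n\to\infty} \int_X f_k\,d\mu_n \leq \liminf_{n\to\infty} \int_X f\,d\mu_n = \liminf_{n\to\infty}\mathcal{J}(\mu_n),$$
using $f_k \leq f$. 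Taking the supremum over $k$ on the left-hand side and invoking the identity above gives $\mathcal{J}(\mu) \leq \liminf_{n\to\infty}\mathcal{J}(\mu_n)$, which is the claim.

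The only genuinely delicate point is the construction and verification that the increasing Lipschitz regularizations $f_k$ converge pointwise to $f$ from below; everything else (the reduction to $f \geq 0$, monotone convergence, the sup-of-continuous-functionals argument) is routine. If one prefers to avoid an explicit formula, one can instead cite the standard fact that on a metric space every nonnegative lower semicontinuous function is the pointwise supremum of a nondecreasing sequence in $\mathcal{C}_b(X)$ (e.g. from \cite{bu89}), and the proof then consists solely of the monotone convergence step plus the supremum argument.
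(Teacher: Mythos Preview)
Your proof is correct. The paper does not actually prove this proposition: it is stated with a reference to \cite{bu89} and to Lemma~1.6 of \cite{santambrogiooptimal}, and no argument is given. Your approach---reduce to $f\ge 0$, approximate from below by the Moreau--Yosida regularizations $f_k$, apply monotone convergence, and then take the supremum of weak*-continuous linear functionals---is precisely the standard argument that underlies those references (in particular, Santambrogio's Lemma~1.6 proceeds exactly this way). So you have supplied the details the paper chose to outsource, and there is nothing to compare beyond that.

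One small remark: your reduction step uses that $\mu\mapsto\mu(X)$ is weak*-continuous, which is true under the interpretation of weak* convergence as testing against $\mathcal{C}_b(X)$ (narrow convergence), which is the one you explicitly adopt and the one implicitly used throughout the paper since $\Omega$ is bounded. Just be aware that under the $C_0(X)$-duality on a noncompact $X$ this step would fail, so the interpretation matters; in the present setting it is fine.
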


In several applications of optimal control in opinion dynamics and crowd motion (see \cite{FornasierSolombRossiBongini,fornasier2014mean}), the functional to be minimized consists of a {\it Lagrangian} term of the form
$$\mathcal{J}_1(\nu,\rho)=\int^T_0 L\big(\nu(t),\rho(t)\big)\,dt.$$
The Lagrangian $L:\M_M(\O)\times\mathcal{P}_1(\O)\to\R\cup\{+\infty\}$ may prescribe, for instance, a certain mutual interaction between the measures $\nu$ and $\rho$, or one can use $L$ to model the distance to the basin of attraction of the target configurations of the measure $\rho$, as in \cite{caponigro2015sparse}. In this case, in order for $\mathcal{J}_1$ to be lower semicontinuous with respect to the pointwise weak* convergence, the lower semicontinuity of $L$ with respect to the weak* convergence suffices. Some interesting particular cases of the functional $\mathcal{J}_1(\nu,\rho)$ above are listed below.

\begin{enumerate}
\item Our decision to let the mass of $\nu$ vary comes from the choice to allow the optimization of the quantity of control agents, in accordance with the goal to achieve.
 We can model the cost of employing a quantity $\nu$ of agents at time $t$ by considering the Lagrangian
$$L\big(\nu(t),\rho(t)\big)=\int_\O f(t,x)\,d\nu(t,x).$$
Here $f:[0,T]\times\O\to[0,+\infty]$ is a lower semicontinuous function, for instance
$$f(t,x)=c(t)|x-x_0|^p,$$
where $p\ge0$, $c(t)$ is a nonnegative integrable function, and $x_0$ represents a sort of \textit{manpower storage room}.

The addition of the term $\mathcal{J}_1$ with the above choice of $L$ to the general cost functional $\mathcal{J}$ can be used to penalize the mass of $\nu$.

\item A common example is the one where we require the dynamics of the measure $\rho$ to satisfy a specific feature, like the collapse of one of its moments or marginals. An example is given by alignment models like the Cucker-Smale one (see \cite{caponigro2015sparse}), where one is interested in a population of individuals $\rho$ function of a spatial variable $x\in\R^d$ and a consensus or velocity variable $v \in \R^d$. The goal of a control strategy, in this case, is to force the alignment of the group, which in term of the state variables means that all the velocities $v$'s tend to coincide. This is ensured by minimizing at every instant $t$ and for any individual with velocity $v$ the square distance between $v$ and the mean $\overline{v}(t)=\int_{\R^{2d}}w\,d\rho(t,x,w)$:
$$\int_{\R^{2d}}\big|v-\overline{v}(t)\big|^2\,d\rho(t,x,v).$$
In more general terms, given a projection $\pi:\R^d\to\R^k$ (with $k\le d)$, then the Lagrangian $L$ may be of the form
$$L\big(\nu(t),\rho(t)\big)=\int_\O\left|x-\int_\O y\,d\pi_{\#}\rho(t,y)\right|^2\,d\pi_{\#}\rho(t,x).$$
Indeed, denoting by $\overline{x}(t)=\int_\O y\,d\pi_{\#}\rho(t,y)$ the center of mass of the projection of $\rho$ at time $t$, the minimization of the above Lagrangian leads to the convergence of $\pi_{\#}\rho(t)$ to the measure $m(t)=\delta_{\overline{x}(t)}.$ For a similar problem in the context of the Hegselmann-Krause model for opinion formation, see \cite{borzikrause}.


\item Another relevant particular case of the functional $\mathcal{J}_1$ is given by
\begin{equation}\label{eq:evacuationfun}
\mathcal{J}_2(\nu,\rho)=\int^T_0\int_C d\rho(t,x)dt
\end{equation}
where $C$ is a given subset of $\O$. By Proposition \ref{prop:intlsc}, the functional $\mathcal{J}_2$ above is pointwise weakly* lower semicontinuous as soon as $C$ is an open set. This also happens when $C$ is closed (which is the most common case in optimal evacuation problems), with $\partial C$ Lebesgue negligible, and $\rho_0$ is in $L^1(\O)$. Minimizing this functional corresponds to the evacuation of $\rho$ from the set $C$.
\item In the cases where a desired final configuration $\overline{\rho}$ of $\rho$ is given, we may use one of the following functionals
$$\mathcal{J}_3(\nu,\rho)=\int^T_0\W_1(\rho(t),\overline{\rho})\,dt\qquad\text{or}\qquad\mathcal{J}_4(\nu,\rho)=\W_1(\rho(T),\overline{\rho})$$
to force $\rho(T)$ to adhere to $\overline{\rho}$. As already noticed, the $\W_1$ distance is continuous with respect to the weak* convergence whenever the measures have uniformly compact support.

There is a slight difference between the two functionals above. The first one prescribes a somewhat \textit{greedy} approach for the optimization procedure, by asking that the distance $\W_1(\rho(t),\overline{\rho})$ cannot be too large on average (in time). The second one, instead, allows a greater freedom in the behavior of $\nu$ which is only prescribed to have a final distribution $\rho(T)$ as close as possible to $\overline\rho$. However, one should always be aware that a greater liberty may translate into a more difficult numerical implementation (since the number of degrees of freedom may grow out of control), which is an ingredient that should play a relevant role in the design of any control problem.

We remark here the connection with the Benamou-Brenier formulation of transport problems \cite{benbre00} where the initial and the final configurations $\rho(0)$ and $\rho(T)$ are both prescribed and the kinetic energy of the system has to be minimized.

\item The adoption of the space $\Lip_{L'}([0,T];\M_M(\O))$ is made for the sake of generality. However, it is often the case that the dynamics of $\nu$ possesses some extra structure which let $\nu$ belong to a narrower subset of $\Lip_{L'}([0,T];\M_M(\O))$. For instance, when $\nu$ represents a conserved quantity in time, we already noticed that its evolution can be described by means of a continuity equation like
$$\frac{\partial\nu}{\partial t}(t,x)=-\dive_x\Big(\big(v(t,x)+u(t,x)\big)\nu(t,x)\Big),$$
where only the component $u$ of the external velocity field is optimized (specifically, $v$ could be the drift depending on the interaction with the other agents, like in \eqref{ourvel}, hence $u$ stands for the optimal strategy {\it subject to} the underlying dynamics). We come back to this case in Section \ref{fornasolo}. Several families of PDEs (Fokker-Planck, Vlasov, etc$\ldots$) determine subsets $B\subseteq \Lip_{L'}([0,T];\M_M(\O))$ which are closed under the pointwise weak* convergence of measures: in all those cases, the functional
$$\mathcal{J}_5(\nu,\rho)=\chi_B(\nu,\rho)$$
is lower semicontinuous with respect to this topology, and can be used in the context of Problem \ref{prob1}.


\item Problem \ref{prob1} does not prescribe any constraint on the dynamics of the control agents $\nu$, except for its maximal speed. One way to impose extra conditions on the curve $\nu$ can be by means of the restriction to a closed subset of $\Lip_{L'}([0,T];\M_M(\O))$, like the set of solutions of a particular PDE, as argued above. This is a very powerful tool from the modeling point of view, but it may create extra difficulties if we want to identify the minimizers of $\mathcal{J}$, unless optimality conditions are available, as for instance in \cite{FornasierSolombRossiBongini}. Another way to give more structure to the dynamics of $\nu$ is to embed the desired features of it inside the functional $\mathcal{J}$, like
$$\mathcal{J}_6(\nu,\rho)=\int^T_0\int_{\O^2}Q(x,y)\,d\nu(t,x)\,d\nu(t,y)\,dt,$$
where $Q:\R^d\times\R^d\to\R$ is a lower semicontinuous function. The functional $\mathcal{J}_6$ is clearly lower semicontinuous with respect to pointwise weak* convergence by Proposition \ref{prop:intlsc}, and forces $\nu$ to self-interact via the kernel $Q$. For instance, if we want to avoid high concentrations of $\nu$, we could opt for kernels like
$$Q(x,y)=-|x-y|^p,\qquad\text{or}\qquad Q(x,y)=|x-y|^{-p},$$
while if, on the contrary, we want $\nu$ to remain as concentrated as possible, we may consider
$$Q(x,y) = |x-y|^p.$$

\item Another interesting class of functionals to model the cost of $\nu$ is given by
$$\int_0^T\left[\int_\O h\big(t,\nu^a(t,x)\big)\,dx+\sum_{x\in\O}k\big(\nu^\#(t,x)\big)\right]\,dt\;,$$
where $\nu^a$ and $\nu^\#$ are respectively the absolutely continuous and atomic parts of $\nu$ (hence, the sum over all $x\in\O$ reduces to the atoms $x$ of $\nu$), while the function $h$ (resp. $k$) is nonnegative and convex (resp. concave), with the properties $h(0)=k(0)=0$ and such that the slopes of $h$ at infinity and of $k$ at $0$ are $+\infty$. This class of functionals over the measures has been studied in \cite{bouchitte1990new}, where it is proved their weak* lower semicontinuity. To this class belongs for instance the Mumford-Shah functional (see \cite{mumfordshah}), that can be obtained by taking
$$h(s)=s^2,\qquad k(s)=1_{\R\setminus\{0\}}(s)=
\begin{cases}
0&\hbox{if }s=0\\ 1&\hbox{otherwise.}
\end{cases}$$
Another interesting choice of the functions $h$ and $k$ is
$$h(s)=\chi_{\{0\}}(s)=
\begin{cases}
0&\hbox{if }s=0\\ +\infty&\hbox{otherwise,}
\end{cases}
\qquad
k(s)=1_{\R\setminus\{0\}}(s),$$
which gives
$$\mathcal{J}_7(\nu,\rho)=\int^T_0\mathcal{H}^0\big(\nu(t)\big)\,dt,$$
being $\mathcal{H}^0$ the counting measure. This functional has a drastic effect: it forces the measure $\nu$ to be discrete at every instant, i.e.,
$$\nu(t)=\sum^N_{i=1}\delta_{x_i(t)},\qquad\text{for some }x_i(t)\in\O,$$
and minimizes the number $N$ of atoms it is supported on. However, the points $x_i(t)$ cannot vary ``too wildly'' on $\O$ in time since $\nu\in\Lip_{L'}([0,T];\M_M(\O))$: indeed, belonging to this class implies that, for any $t,s\in[0,T]$, the atoms $x_i(t)$ of $\nu(t,\cdot)$ and $y_j(s)$ of $\nu(s,\cdot)$ must satisfy
$$d\left(\sum^N_{i=1}\delta_{x_i(t)},\sum^{N'}_{j=1}\delta_{y_j(s)}\right)\le L'|t-s|.$$
\end{enumerate}

\section{A control problem in pedestrian dynamics} \label{fornasolo}


In this section, we consider a rather natural control problem arising in pedestrian dynamics, that can be reformulated as Problem \ref{prob1} with a specific choice of the cost functional $\mathcal{J}$.
\textbf{
\begin{problem}\label{prob2}
\textnormal{Given $\rho_0,\nu_0\in\mathcal{P}_c(\O)$, $C\subset\O$, and $K_1,K_2,H_1,H_2\in\Adm_\ell$, solve
$$\min_{u\in\Adm_\ell}\int^T_0\left(\int_C d\rho(t,x)+\int_\O|u(t,x)|^p\,dx\right)\,dt$$
subject to
\begin{equation}\label{eq:conteq}
\begin{cases}
\ds\frac{\partial\rho}{\partial t}(t)=-\dive_x\Big(\big((K_1*\rho)(t)+(H_1*\nu)(t)\big)\rho(t)\Big)&\text{for }t\in(0,T],\\
\ds\frac{\partial\nu}{\partial t}(t)=-\dive_x\Big(\big((K_2*\rho)(t)+(H_2*\nu)(t)+u(t)\big)\nu(t)\Big)&\text{for }t\in(0,T],\\
\rho(0)=\rho_0,&\\
\nu(0)=\nu_0.&
\end{cases}
\end{equation}}
\end{problem}}

In Problem \ref{prob2}, the two populations $\rho$ and $\nu$ are interacting via the kernels $K_1,K_2,H_1,H_2$. In addition, the population $\nu$ is trying to optimize its trajectory (the function $u$) in order to reach the goal encoded in the cost functional. As already discussed in the previous section, the goal is to evacuate the measure $\rho$ from the set $C$, while at the same time penalizing too high values of the optimized velocity field $u$. This problem has been treated extensively, especially under the further constraint for $\nu$ to be atomic, in \cite{albi2015invisible,bepito12,fornasier2014mean,MFOC}. Notice that, since the control mass $\nu$ is subjected to a continuity equation, its mass remains constant, and hence we may assume $\nu\in\mathcal{P}_1(\O)$, instead of the more general $\nu\in\M_M(\O)$.

\begin{figure}[ht!]
\centering
\includegraphics[width=0.32\textwidth]{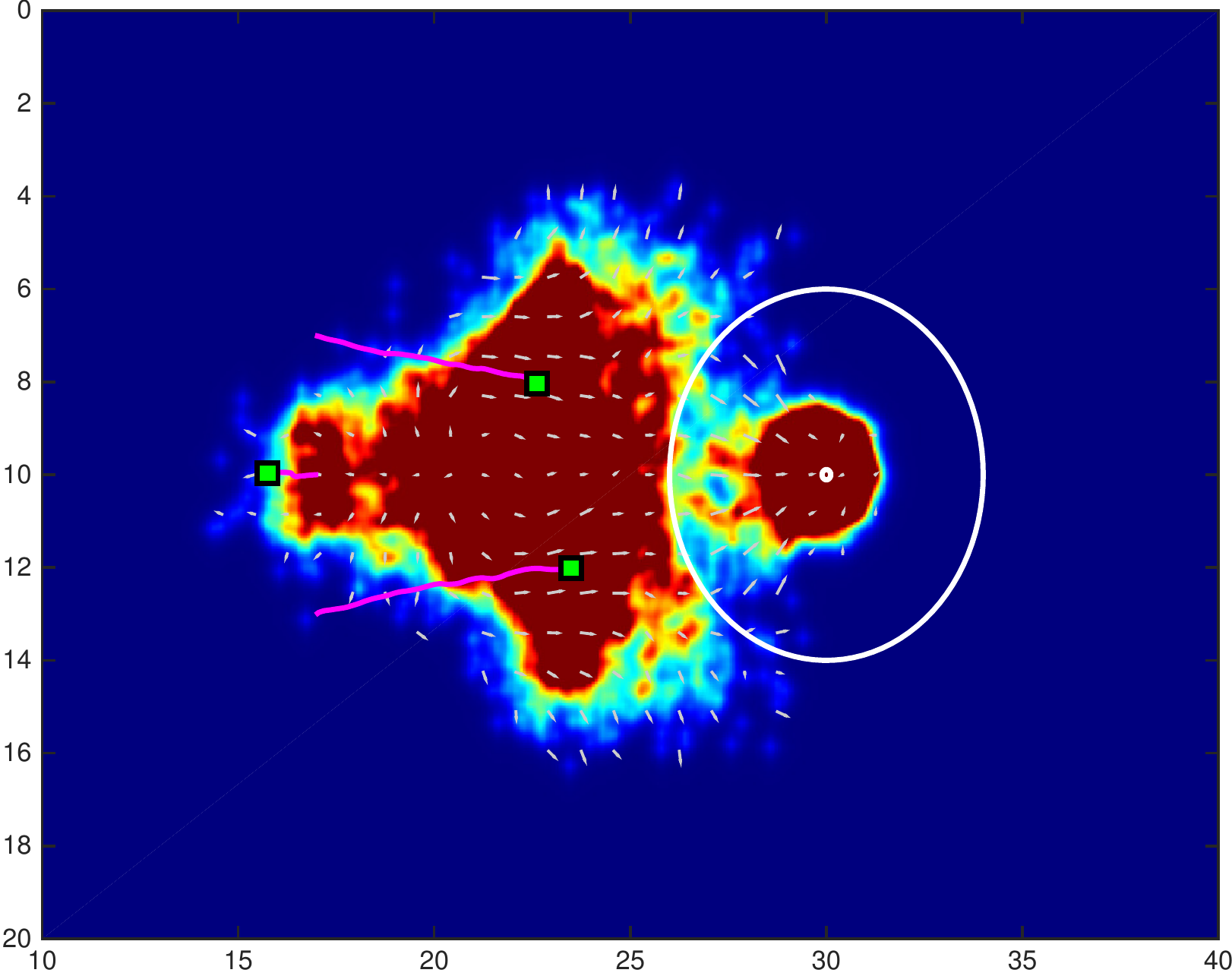}
\includegraphics[width=0.32\textwidth]{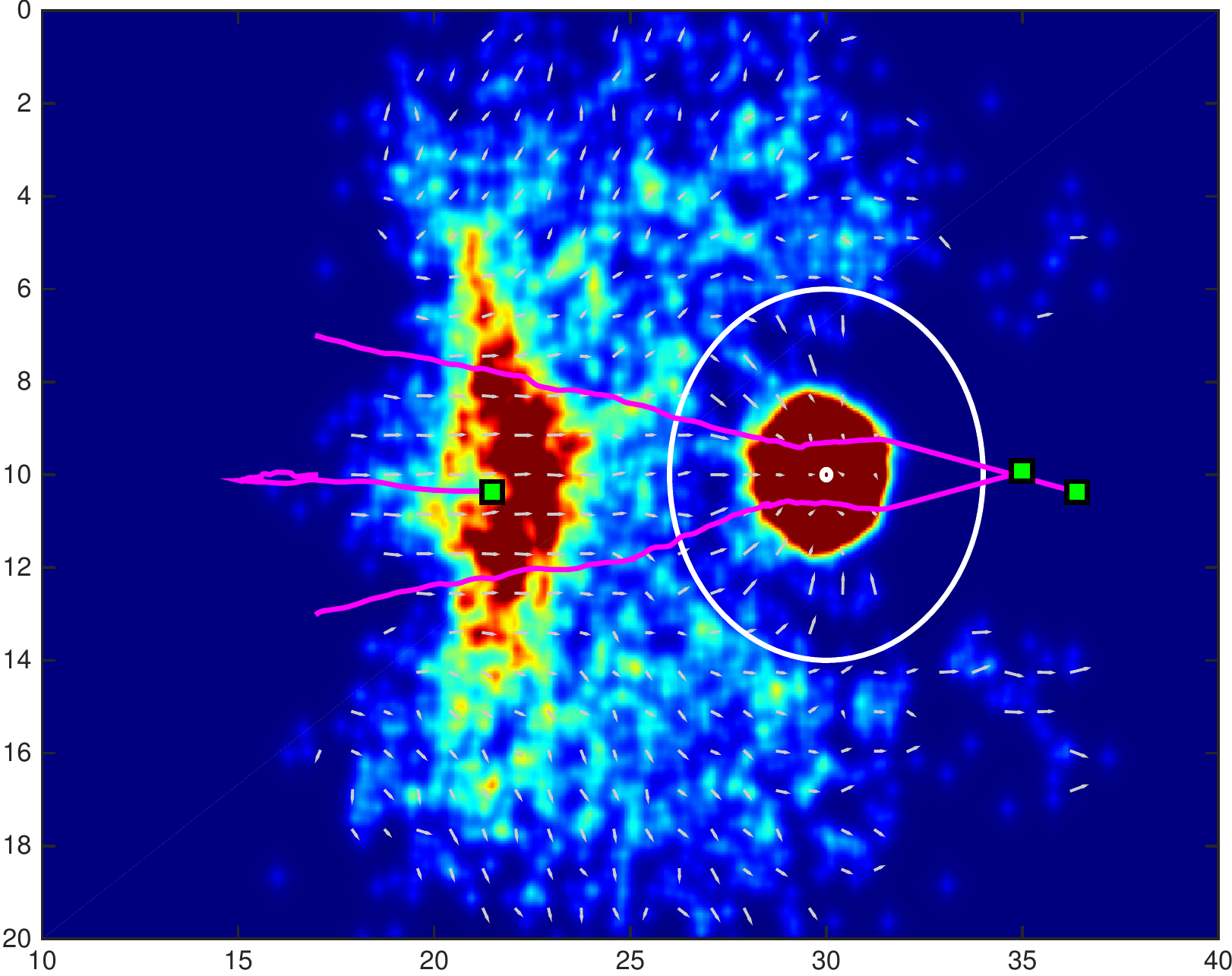}
\includegraphics[width=0.32\textwidth]{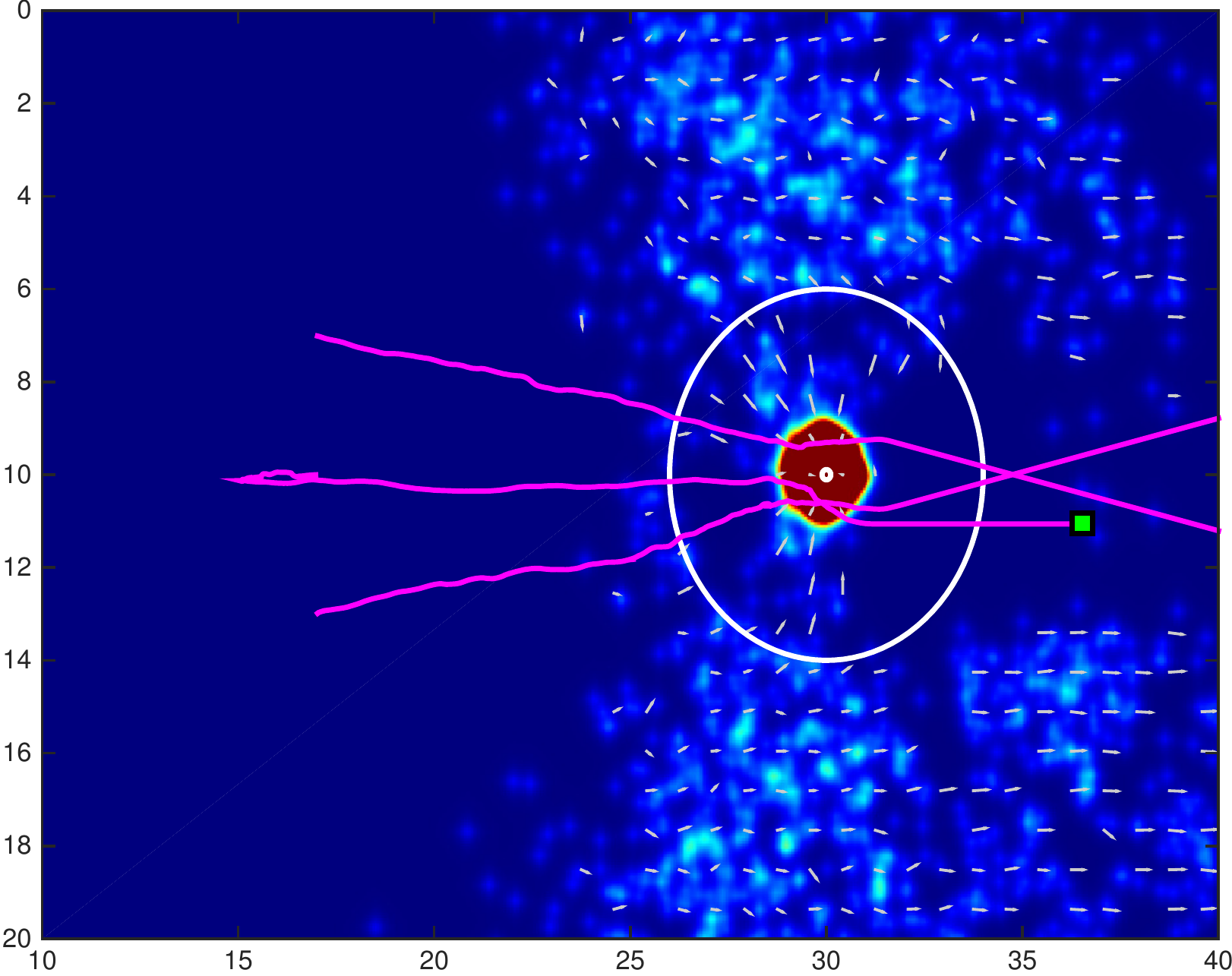}
\caption{An instance of Problem \ref{prob2} where $\nu$ is the empirical measure centered on 3 atoms. The area to be evacuated is $\R^2$ except one point, representing the exit. (Courtesy of the authors of \protect\cite{albi2015invisible}).}
\label{fig:meso}
\end{figure}

To help the reader visualize the setting of Problem \ref{prob2}, in Figure \ref{fig:meso} we report the control strategy adopted by a measure $\nu$ with three atoms (in green) to let a continuous mass $\rho$ (in red) evacuate the area via the exit located at the center of the circle. The exit is only visible to the agents $\rho$ inside the circle, while $\nu$ knows the entire environment, instead. In this situation, the interaction kernels are all repulsive at short-range, since $\rho$ and $\nu$ model pedestrians which cannot overlap in space. Therefore, if all the mass $\rho$ accumulates around the exit, a big queue would be formed due to self-repulsion. The mass $\nu$ avoids this by letting one of its atom wait before helping the portion of $\rho$ surrounding him: only after part of $\rho$ is already evacuated this atom moves and leads its portion of $\rho$ to the exit. In this way the congestion is much lower and the evacuation faster.

In order to establish the well-posedness of Problem \ref{prob2}, we rewrite it as
\[\begin{split}
\min\Big\{&\mathcal{J}(\nu,\rho,u)+\chi_A(\nu,\rho,u)\ :\\
&(\nu,\rho,u)\in\Lip_{L'}([0,T];\mathcal{P}_1(\O))\times\Lip_L([0,T];\mathcal{P}_1(\O))\times\Adm_\ell\Big\}
\end{split}\]
where
$$\mathcal{J}(\nu,\rho,u)=\int^T_0\left(\int_C d\rho(t,x)+\int_\O|u(t,x)|^p\,dx \right)dt,$$
and
\begin{align*}
A=\big\{(\nu,\rho,u)\in\Lip_{L'}([0,T];\mathcal{P}_1(\O))\times\Lip_L([0,T];\mathcal{P}_1&(\O))\times\Adm_\ell\;:\\
&\;(\nu,\rho,u)\text{ satisfies }\eqref{eq:conteq} \big\}.
\end{align*}
Notice that $\Adm_\ell$ is compact with respect to convergence in the sense of \eqref{locweakconv} (see Theorem \ref{thm:3}). Therefore, if we define $\tau$ to be the topology on the product space
$$Y:=\Lip_{L'}([0,T];\mathcal{P}_1(\O))\times\Lip_L([0,T];\mathcal{P}_1(\O))\times\Adm_\ell$$
associated with the pointwise weak* topology on $(\rho,\nu)$ and with the convergence \eqref{locweakconv} on $u$, then, by Theorem \ref{thm:3} and Lemma \ref{compactM} it follows that $\langle Y,\tau\rangle$ is a compact topological space. By Remark \ref{rem:extraterm}, the next two Lemmas are sufficient to conclude that Problem \ref{prob2} admits a solution.

\begin{lemma}
The set $A$ is a closed subset of $Y$ with respect to the topology $\tau$.
\end{lemma}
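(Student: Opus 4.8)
The plan is to mimic the argument of Lemma~\ref{le:eqclosed}, adapting it to account for the second continuity equation (the one for $\nu$), the presence of the control $u$, and the weaker topology \eqref{locweakconv} in the $u$-variable. Take a sequence $(\nu_n,\rho_n,u_n)_{n\in\N}\subset A$ converging in $\tau$ to some $(\nu,\rho,u)\in Y$; by definition of $\tau$ this means $(\nu_n(t),\rho_n(t))\to(\nu(t),\rho(t))$ weakly* for every $t\in[0,T]$, while $u_n\to u$ in the sense of \eqref{locweakconv}. We must show that the triple $(\nu,\rho,u)$ still satisfies \eqref{eq:conteq} in the distributional sense of Definition~\ref{defsolution}, i.e.\ $\rho(0)=\rho_0$, $\nu(0)=\nu_0$, and for every test function $\phi\in\mathcal{C}^\infty_0([0,T];\mathcal{C}^\infty_b(\O))$ the two weak formulations hold with $v_\rho=(K_1*\rho)+(H_1*\nu)$ and $v_\nu=(K_2*\rho)+(H_2*\nu)+u$ respectively. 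The initial conditions pass to the limit exactly as in Lemma~\ref{le:eqclosed}, by uniqueness of the weak* limit. The $\rho$-equation in \eqref{eq:conteq} is \emph{identical in structure} to the one treated in Lemma~\ref{le:eqclosed} (only the kernels are renamed $K_1,H_1$), so that part requires no new idea: dominated convergence together with the admissibility bounds \eqref{eq:boundinL1}–\eqref{eq:deltaB} and the uniform compact support of the measures (Theorem~\ref{th:bound}) gives the passage to the limit.

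The genuinely new work is the $\nu$-equation, and specifically the term involving the control $u_n$, namely
\[
\int_0^T\int_\O u_n(t,x)\cdot\nabla\phi(t,x)\,d\nu_n(t,x)\,dt.
\]
The drift terms $(K_2*\rho_n)$ and $(H_2*\nu_n)$ are handled verbatim as in Lemma~\ref{le:eqclosed} (with $\nu_n$ now playing the role of the reference measure against which one integrates). For the control term, the idea is to recognize it as a pairing of the form appearing in \eqref{compmu} of Theorem~\ref{thm:3}: the measures $\nu_n(t)$ converge weakly* to $\nu(t)$ for every $t$ and are supported in a fixed compact set $\O$, the function $\varphi(x):=\nabla\phi(t,x)$ belongs to $\mathcal{C}^1_c(\R^d,\R^d)$ after a cutoff (since $\O$ is bounded we may multiply by a fixed cutoff equal to $1$ on $\overline\O$ without changing the integrals), and $u_n\in\Adm_\ell$ converges to $u$ in the sense of \eqref{locweakconv}. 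Hence \eqref{compmu} applies \emph{provided} we first check that the convergence $u_n\to u$ in the sense \eqref{locweakconv} is exactly the hypothesis under which \eqref{compmu} was established — which it is, since the second part of Theorem~\ref{thm:3} is stated precisely for a sequence $(g_n)$ in $\Adm_\ell$ enjoying \eqref{locweakconv} together with Wasserstein-convergent measures. One subtlety is that Theorem~\ref{thm:3} pairs $g_n$ against a \emph{$t$-independent} $\varphi$, whereas here $\nabla\phi(t,\cdot)$ depends on $t$; this is resolved by an approximation of $\phi$ by piecewise-in-time test functions, or by invoking \eqref{compmu} for each fixed $t$ together with a dominated-convergence argument in $t$, the integrable majorant being $\ell(t)(1+\delta(\O))\sup|\nabla\phi|$ exactly as in \eqref{eq:boundinL1}. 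Alternatively one observes that $\nu(t)\otimes u_n(t)$ is dominated in a way that permits exchanging limit and time integral.

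The main obstacle, therefore, is the treatment of the control term: one must be careful that the topology $\tau$ on the $u$-component is \emph{weak} (only tested against $W^{-1,p'}$ functionals with compactly supported, $t$-localized test objects), so one cannot simply pass to the limit pointwise in $(t,x)$ as one does with the convolution terms $H_1*\nu_n$ and $K_i*\rho_n$ (which converge pointwise because the kernels are continuous and the measures converge weakly*). The resolution is that we never need pointwise convergence of $u_n$: the only place $u_n$ enters the weak formulation is linearly, paired against the smooth compactly supported vector field $\nabla\phi(t,\cdot)$ and the measure $\nu_n(t)$, which is exactly the setting of \eqref{compmu}. Once this term is dealt with, assembling the three pieces — initial data, $\rho$-equation, $\nu$-equation — shows $(\nu,\rho,u)\in A$, proving that $A$ is $\tau$-closed. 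Combined with the compactness of $\langle Y,\tau\rangle$ noted above and Remark~\ref{rem:extraterm}, this yields the well-posedness of Problem~\ref{prob2} once the lower semicontinuity of $\mathcal{J}$ (the content of the following lemma) is established.
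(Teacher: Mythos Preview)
Your proposal is correct and follows essentially the same route as the paper: reduce everything except the control term to the argument of Lemma~\ref{le:eqclosed}, and handle the remaining limit $\int_0^T\langle\nabla\phi,u_n\,\nu_n\rangle\,dt\to\int_0^T\langle\nabla\phi,u\,\nu\rangle\,dt$ by invoking \eqref{compmu} from Theorem~\ref{thm:3}. You are in fact slightly more careful than the paper in flagging the $t$-dependence of $\nabla\phi(t,\cdot)$ versus the $t$-independent $\varphi$ in the statement of \eqref{compmu}; the paper simply asserts the limit is ``a consequence of \eqref{compmu}'' without comment.
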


\begin{proof}
Take $(\nu_n,\rho_n,u_n)_{n\in\N}\subset A$ such that $(\nu_n(t),\rho_n(t))\to(\nu(t),\rho(t))$ weakly* for every $t\in[0,T]$ and $u_n\to u$ in the sense of \eqref{locweakconv}. Very much likely as in the proof of Lemma \ref{le:eqclosed}, we need to show that for every $\phi\in\mathcal{C}^\infty_0([0,T];\mathcal{C}^\infty_b(\O))$ it holds $\rho(0)=\rho_0$ and
$$\int^T_0\int_\O\left(\frac{\partial\phi}{\partial t}(t,x)+\Big((K_1*\rho)(t,x)+(H_1*\nu)(t,x)\Big)\cdot\nabla\phi(t,x)\right) d\rho(t,x)\,dt=0,$$
as well as $\nu(0)=\nu_0$ and
\begin{equation*}\int^T_0\!\!\int_\O\!\left(\frac{\partial\phi}{\partial t}(t,x)\!+\!\Big((K_2*\rho)(t,x)\!+\!(H_2*\nu)(t,x)\!+\!u(t,x)\Big)\!\cdot\!\nabla\phi(t,x)\!\right)\!d\nu(t,x)dt=0.
\end{equation*}
Using the same argument as in the proof of Lemma \ref{le:eqclosed}, we can prove that the integrals above are limit as $n\to\infty$ of the same integrals with $\nu_n,\rho_n,u_n$ in place of $\nu,\rho,u$, the only exception being the limit
$$\lim_{n\to\infty}\int^T_0\int_\O u_n(t,x)\cdot\nabla\phi(t,x)\,d\nu_n(t,x)\,dt=\int^T_0\int_\O u(t,x)\cdot\nabla\phi(t,x)\,d\nu(t,x)\,dt.$$
However, this limit is a consequence of \eqref{compmu}, since the sequence $(u_n)_{n\in\N}$ belongs to $\Adm_\ell$.
\end{proof}

\begin{lemma}
The functional $\mathcal{J}$ is lower semicontinuous with respect to the product topology $\tau$.
\end{lemma}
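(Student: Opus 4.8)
The functional splits as $\mathcal{J}=\mathcal{J}^{(1)}+\mathcal{J}^{(2)}$, where $\mathcal{J}^{(1)}(\nu,\rho,u)=\int^T_0\rho(t)(C)\,dt$ depends only on $\rho$ and $\mathcal{J}^{(2)}(\nu,\rho,u)=\int^T_0\int_\O|u(t,x)|^p\,dx\,dt$ depends only on $u$; since both are nonnegative, it is enough to prove that each is lower semicontinuous with respect to $\tau$. So fix a sequence $(\nu_n,\rho_n,u_n)\to(\nu,\rho,u)$ in $\tau$, so that $\rho_n(t)\rightharpoonup\rho(t)$ weakly* for every $t\in[0,T]$ and $u_n\to u$ in the sense of \eqref{locweakconv}.

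The term $\mathcal{J}^{(1)}$ coincides with the functional $\mathcal{J}_2$ of \eqref{eq:evacuationfun}, whose pointwise weak* lower semicontinuity was already discussed there under the standing hypotheses on $C$: for each $t$ the Portmanteau inequality applied to the open set $\mathring C$ gives $\rho(t)(C)\le\liminf_n\rho_n(t)(C)$ (when $C$ is closed one uses in addition that $\rho_0\in L^1(\O)$ forces $\rho(t)\ll\mathcal{L}^d$, hence $\rho(t)(\partial C)=0$ and $\rho(t)(C)=\rho(t)(\mathring C)$), and Fatou's lemma over $t\in[0,T]$ — the integrands being measurable and bounded by $1$ — yields $\mathcal{J}^{(1)}(\nu,\rho,u)\le\liminf_n\mathcal{J}^{(1)}(\nu_n,\rho_n,u_n)$.

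For $\mathcal{J}^{(2)}$ I would invoke the semicontinuity estimate \eqref{semicont+} of Theorem \ref{thm:3}, taking there the compact set to be $\overline\O$ and the constant sequence $\mu_n\equiv\mu$, where $\mu$ is the normalized Lebesgue measure on $\O$ (so that $\langle\psi(g(t,\cdot)),\mu\rangle=|\O|^{-1}\int_\O\psi(g(t,x))\,dx$ and $\mu_n\to\mu$ trivially in $\W_1$). If $p=1$, the choice $\psi(w)=|w|$ is admissible (nonnegative, convex, globally $1$--Lipschitz) and \eqref{semicont+} gives the claim at once. If $p>1$, the map $|\cdot|^p$ is not globally Lipschitz, so I would replace it by the truncations $\psi_k$ that coincide with $|\cdot|^p$ on $\{|w|\le k\}$ and are prolonged affinely, with slope $pk^{p-1}$, for $|w|>k$: each $\psi_k$ is nonnegative, convex and globally Lipschitz, $\psi_k\le|\cdot|^p$, and $\psi_k\nearrow|\cdot|^p$ pointwise. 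Applying \eqref{semicont+} with $\psi=\psi_k$ gives
$$\int^T_0\!\!\int_\O\psi_k(u(t,x))\,dx\,dt\ \le\ \liminf_{n\to\infty}\int^T_0\!\!\int_\O\psi_k(u_n(t,x))\,dx\,dt\ \le\ \liminf_{n\to\infty}\int^T_0\!\!\int_\O|u_n(t,x)|^p\,dx\,dt,$$
and letting $k\to\infty$ the monotone convergence theorem on the left gives $\mathcal{J}^{(2)}(\nu,\rho,u)\le\liminf_n\mathcal{J}^{(2)}(\nu_n,\rho_n,u_n)$. (If one wishes to use \eqref{semicont+} only along the subsequence it produces, it suffices to start from a subsequence of $(u_n)$ realizing $\liminf_n\mathcal{J}^{(2)}(u_n)$; it still converges to $u$ in the sense of \eqref{locweakconv}, whose limits are unique, so Theorem \ref{thm:3} applies along it.)

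Adding the two inequalities yields $\mathcal{J}(\nu,\rho,u)\le\liminf_n\mathcal{J}(\nu_n,\rho_n,u_n)$, which is the assertion. The only genuinely delicate point is the passage from \eqref{semicont+}, which is stated for globally Lipschitz integrands, to the power $|\cdot|^p$ with $p>1$; this is precisely what the truncation together with monotone convergence provides, the remaining steps being routine applications of the Portmanteau theorem and of Fatou's lemma.
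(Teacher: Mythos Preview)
Your proof is correct and follows the same route as the paper: the evacuation term is handled via the lower semicontinuity already recorded for $\mathcal{J}_2$ in \eqref{eq:evacuationfun}, and the $L^p$ term is handled by invoking \eqref{semicont+} from Theorem~\ref{thm:3} with the constant sequence $\mu_n$ equal to normalized Lebesgue measure on $\O$. The only difference is that the paper uses a single bounded cutoff of $|\cdot|^p$ (tacitly exploiting that $u\in\Adm_\ell$ is uniformly bounded on the bounded set $\O$), whereas you approximate $|\cdot|^p$ from below by convex globally Lipschitz truncations $\psi_k$ and pass to the limit by monotone convergence; your variant has the mild advantage of matching the convexity hypothesis of Theorem~\ref{thm:3} as literally stated.
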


\begin{proof}
From \eqref{eq:evacuationfun}, we know that the functional $\mathcal{J}(\cdot,\cdot,u)$ is semicontinuous in the space $\Lip_{L'}([0,T];\mathcal{P}_1(\O))\times\Lip_L([0,T];\mathcal{P}_1(\O))$ for all $u\in\Adm_\ell$. Hence, if we show that the term
\begin{align}\label{eq:normp}
\int^T_0\int_\O|u(t,x)|^p\,dx\,dt,
\end{align}
is lower semicontinuous in $\Adm_\ell$ with respect to the convergence \eqref{locweakconv}, the statement is proved. However, it suffices to observe that, by Theorem \ref{thm:3}, inequality \eqref{semicont+} holds for any Lipschitz function $\psi$. Therefore, if $R > 0$ is such that $\O\subseteq B(0,R)$ and we denote by $\mathcal{L}^d$ the Lebesgue measure on $\R^d$, then by setting
\begin{align*}
\psi(x)=\begin{cases}
|x|^p&\text{ if }x\in B(0,R),\\
R^p&\text{ otherwise,}
\end{cases}
\end{align*}
and
$$\mu_n(t)=\frac{1}{\mathcal{L}^d(\O)}\mathcal{L}^d,\qquad\text{for every $n\in\N$ and }t\in[0,T],$$
the lower semicontinuity of \eqref{eq:normp} is a direct consequence of \eqref{semicont+}.
\end{proof}

\section{Concluding remarks}\label{conclusions}

In this paper we addressed the well-posedness of several optimal control problems with Vlasov-type PDE constraints. We first highlighted several crucial features of such PDEs, like the uniform compactness of the support of the trajectories and their smoothness, properties which were then exploited to show the existence of solutions to Problem \ref{prob1}. Several applications of this result were shown by a list of cost functionals falling into our framework, which eventually led us to establish the well-posedness of an evacuation problem encountered in pedestrian dynamics.

A future research direction would be to try to weaken the regularity of the PDE constraints in order to see if our strategy still works. In particular, it could be of interest to try to weaken the regularity assumptions of the class of admissible kernels $\Adm_{\ell}$, allowing for the possibility of nonsmoothness in space given by singularities, see for instance \cite{difrancesco}. It is indeed clear that the closedness properties of our PDE constraints are likely way stronger than necessary. In fact, while the dynamics underlying the control problem is closed under uniform convergence of the trajectories with respect to the Wasserstein distance (see Lemma \ref{le:eqclosed}), what is needed in the proof of Theorem \ref{th:mainmin} is simply the pointwise weak* convergence. 

Concerning the possibility of enlarging the list of functionals presented in Section \ref{exj}, an interesting cost functional that was not included in our analysis is given by
$$\mathcal{J}(\nu,\rho) = \int^T_0 \left|\frac{\partial}{\partial t}\int_{\O}d\nu(t,x)\right|\,dt.$$
This functional penalizes the change of the mass of $\nu$ in time, and appears in contexts where hiring control agents after the dynamics has started is costlier than doing it before. Another functional of interest is
$$\mathcal{J}(\nu,\rho) = \int^T_0 |\nu'(t)|\,dt,$$
where $|\nu'(t)|$ stands for the $d$-metric derivative of $\nu$ at time $t$. The relationship between the above functional and the $\ell_p$--cost \eqref{eq:normp} whenever $\nu$ is subjected to a continuity equation like in \eqref{eq:conteq} is still unclear.

The development of numerical methods for multi-population optimal control problems is a topic that originated a large literature in the last years. Besides the well-established methodology of the discretization of PDE constrained optimal control problems by means of finite element methods, mainly applied for elliptic and parabolic type of equations (see for instance \cite{vexleradaptive}), a particularly promising approach is based on their kinetic description using Boltzmann models, see \cite{albikinetic}. In \cite{albi2015invisible}, the implementation of such methods to solve a control problem similar to Problem \ref{prob2} successfully produced nontrivial optimal strategies, one of which was shown in Figure \ref{fig:meso}. It would be of interest to address in future works the feasibility of these numerical methods for different cost functionals, like those appearing in Section \ref{exj}.

\section*{Acknowledgements}

Mattia Bongini acknowledges the support of the ERC-Starting Grant HDSPCONTR ``High-Dimensional Sparse Optimal Control'' and the fruitful and stimulating discussions with Massimo Fornasier and Francesco Solombrino. The work of Giuseppe Buttazzo is part of the Project 2010A2TFX2 ``Calcolo delle Variazioni'' funded by the Italian Ministry of Research and University. The second author is member of the Gruppo Nazionale per l'Analisi Matematica, la Probabilit\`a e le loro Applicazioni (GNAMPA) of the Istituto Nazionale di Alta Matematica (INdAM).


\appendix

\section{Well-posedness and regularity estimates for \eqref{eq:compacteq}}

The existence of solutions of system \eqref{eq:compacteq} is deeply interwined with that of its discretized counterpart
\begin{equation}\label{discreteeq}
\begin{cases}
\ds\dot{x}_i(t)=\frac1N\sum^N_{j=1}K\big(t,x_i(t)-x_j(t)\big)+f\big(t,x_i(t)\big)\\
x_i(0)=x^N_{0,i}
\end{cases}
\qquad i=1,\ldots,N,
\end{equation}
as the following preliminary result shows.

\begin{proposition}\label{p-rewritten}
	Fix $N \in \N$ and $T>0$. 
	Let $(x^N_{1},\ldots,x^N_{N}):[0,T]\rightarrow\R^{dN}$ be a solution of system \eqref{discreteeq} with initial datum $(x^N_{0,1},\ldots,x^N_{0,N}) \in \R^{dN}$. Then the empirical measure-valued curve $\rho^N:[0,T]\rightarrow\PP(\R^d)$ defined as
	\begin{align*}
	\rho^N(t) = \frac{1}{N}\sum^N_{i = 1}\delta_{x^N_i(t)}, \quad \text{for all } t \in [0,T],
	\end{align*}
	is a solution of \eqref{eq:compacteq} with initial datum
	\begin{align*}
	\rho^N_{0}= \frac{1}{N}\sum^N_{i = 1}\delta_{x^N_{0,i}}.
	\end{align*}
\end{proposition}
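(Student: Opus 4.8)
The plan is to verify directly that $\rho^N$ meets the two requirements of Definition \ref{defsolution}; the core is a bookkeeping computation that turns the weak formulation of \eqref{eq:compacteq} into the ODE system \eqref{discreteeq}. I would first dispose of the preliminary points: a solution of \eqref{discreteeq} is absolutely continuous on $[0,T]$, so each $t\mapsto x^N_i(t)$ is continuous and stays bounded, whence every $\rho^N(t)$ is compactly supported and $\rho^N$ takes values in $\mathcal{P}_1(\R^d)$, so that $K*\rho^N$ is well defined. Using $\W_1(\delta_x,\delta_y)=|x-y|$ together with the ``diagonal'' transport plan $\frac1N\sum_{i}\delta_{(x^N_i(t),\,x^N_i(s))}\in\Gamma(\rho^N(t),\rho^N(s))$, one obtains $\W_1(\rho^N(t),\rho^N(s))\le\frac1N\sum_{i}|x^N_i(t)-x^N_i(s)|$, so $\rho^N$ inherits (uniform) continuity from the trajectories, and $\rho^N(0)=\rho^N_0$ is immediate.

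For the weak formulation, I would fix $\phi\in\mathcal{C}^\infty_0([0,T];\mathcal{C}^\infty_b(\R^d))$ and set $g_i(t):=\phi(t,x^N_i(t))$. Since $x^N_i$ is absolutely continuous and $\phi$ is smooth with bounded derivatives, $g_i$ is absolutely continuous with
\[
\dot g_i(t)=\frac{\partial\phi}{\partial t}\big(t,x^N_i(t)\big)+\nabla\phi\big(t,x^N_i(t)\big)\cdot\dot x^N_i(t)\qquad\text{for a.e. }t.
\]
The crucial observation is that, by the very definition of the empirical measure,
\[
\frac1N\sum_{j=1}^N K\big(t,x^N_i(t)-x^N_j(t)\big)=\int_{\R^d}K\big(t,x^N_i(t)-y\big)\,d\rho^N(t,y)=\big(K*\rho^N\big)\big(t,x^N_i(t)\big),
\]
so the right-hand side of \eqref{discreteeq} is exactly the velocity field \eqref{ourvel} with $\rho=\rho^N$ evaluated along the $i$-th particle, i.e. $\dot x^N_i(t)=v(t,x^N_i(t))$ with $v=K*\rho^N+f$. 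Averaging $\dot g_i$ over $i$ and recognizing the finite sums as integrals against $\rho^N(t)$, I obtain for a.e. $t$
\[
\frac{d}{dt}\int_{\R^d}\phi(t,x)\,d\rho^N(t,x)=\int_{\R^d}\Big(\frac{\partial\phi}{\partial t}(t,x)+v(t,x)\cdot\nabla\phi(t,x)\Big)\,d\rho^N(t,x).
\]
Integrating over $[0,T]$, the left-hand side equals $\frac1N\sum_{i}\big(\phi(T,x^N_i(T))-\phi(0,x^N_i(0))\big)$, which vanishes since $\phi(0,\cdot)=\phi(T,\cdot)=0$; this is precisely the integral identity in Definition \ref{defsolution}, so $(\rho^N,v)$ solves \eqref{eq:compacteq} with initial datum $\rho^N_0$.

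I do not expect a genuine obstacle: the statement is essentially a direct verification. The points deserving a little care are the absolute continuity of $t\mapsto\phi(t,x^N_i(t))$ — which legitimizes the fundamental theorem of calculus and guarantees that the time-integral of $v\cdot\nabla\phi$ along each trajectory is finite — and the identification of the ODE drift with $(K*\rho^N)(t,\cdot)$ evaluated at $x^N_i(t)$, together with the vanishing of the endpoint-in-time terms afforded by $\phi\in\mathcal{C}^\infty_0$ in the time variable.
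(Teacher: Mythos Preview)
Your proposal is correct and follows essentially the same route as the paper's proof: differentiate $t\mapsto\langle\phi(t,\cdot),\rho^N(t)\rangle$ via the chain rule along each trajectory, identify $\frac1N\sum_j K(t,x^N_i-x^N_j)$ with $(K*\rho^N)(t,x^N_i)$, integrate in time, and use the vanishing of $\phi$ at $t=0,T$. If anything, your write-up is slightly more complete than the paper's, since you also explicitly verify the $\W_1$-continuity of $\rho^N$ required by Definition~\ref{defsolution}, which the paper's proof leaves implicit.
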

\begin{proof}
	Notice that, for all $t \in [0,T]$ and for all $\phi \in \mathcal{C}^{\infty}_0([0,T];\mathcal{C}^{\infty}_b(\O))$, it holds
	\begin{align*}
	\frac{d}{dt}\langle \phi(t,\cdot), \rho^N(t) \rangle &= \frac{1}{N} \sum^N_{i = 1} \frac{d}{dt}\phi(t,x^N_i(t)) \\
	&= \frac{1}{N} \sum^N_{i = 1} \left(\frac{\partial \phi}{\partial t}(t,x^N_i(t)) + \dot{x}_i^N(t) \cdot \nabla\phi(t,x^N_i(t)) \right),
	\end{align*}
	where $\langle \cdot, \cdot \rangle$ denotes the duality pairing between measures and continuous functions.
	By directly applying the expression of $\dot{x}_i^N(t)$ in \eqref{discreteeq} and integrating between $0$ and $t$, we obtain
	\begin{align*}
	\langle \phi(t,\cdot), \rho^N(t)\rangle& - \langle \phi(0,\cdot), \rho^N_0\rangle =\int^t_0 \frac{d}{ds} \langle \phi(s,\cdot), \rho^N(s) \rangle ds \\
	&= \int^t_0\! \int_{\R^d} \!\!\left(\frac{\partial \phi}{\partial s}(s,x) \!+\! \left((K*\rho^N)(s,x) \!+\! f(s,x)\right) \! \cdot \!\nabla\phi(s,x) \!\right) \!d\rho^N(s,x) ds.
	\end{align*}
	Since by assumption $\phi(T,\cdot) = \phi(0,\cdot) \equiv 0$, this shows that $\rho^N$ is a solution of \eqref{eq:compacteq} with initial datum $\rho^N_0$.
\end{proof}

The following result shows that whenever $K$ and $f$ are admissible, solutions of system \eqref{discreteeq} exist and are unique. Its proof is standard, but we report it to show the independence of the result with respect to the discretization parameter $N\in\N$, which plays a crucial role in Theorem \ref{th:bound}.

\begin{lemma}\label{le:discrunif}
	Fix $T>0$, $\ell\in L^1(0,T)$, and $K,f\in\Adm_{\ell}([0,T]\times\R^d;\R^d)$. Suppose that, for every $N\in\N$, $x^N_{0,1},\ldots,x^N_{0,N}\in B$, for some bounded set $B$. Then the system \eqref{discreteeq} has a unique absolutely continuous solution $(x^N_1,\ldots,x^N_N):[0,T]\to\R^{dN}$ in the Carath\'{e}odory sense, see \cite{filippov}. Moreover, there exist $R,L>0$ depending only on $T,B,\ell$ (and thus independent of $N$) such that, for every $N\in\N$ and $i=1,\ldots,N$, it holds
	\begin{itemize}
		\item $|x^N_i(t)|\le R$ for every $t\in[0,T]$;
		\item $|x^N_i(t)-x^N_j(s)|\le\ds L\int^t_s\ell(\theta)\,d\theta$ for every $t,s\in[0,T]$.
	\end{itemize}
\end{lemma}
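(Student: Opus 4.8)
The plan is to treat \eqref{discreteeq} as an ordinary differential system in $\R^{dN}$ and to read off every estimate from the admissibility of $K$ and $f$, being careful that no constant secretly depends on $N$. First I would write $\mathbf x=(x_1,\dots,x_N)\in\R^{dN}$ and collect the right-hand sides of \eqref{discreteeq} into a single map $F\colon[0,T]\times\R^{dN}\to\R^{dN}$ whose components are $F_i(t,\mathbf x)=\frac1N\sum_{j=1}^N K(t,x_i-x_j)+f(t,x_i)$. From Definition \ref{def:adm}: property (i) makes $F$ a Carath\'eodory map (measurable in $t$, continuous in $\mathbf x$); property (ii) together with $|x_i-x_j|\le|x_i|+|x_j|$ yields a Lipschitz bound $|F(t,\mathbf x)-F(t,\mathbf y)|\le c\,\ell(t)\,|\mathbf x-\mathbf y|$ with $c$ a universal constant; and property (iii) yields the linear growth $|F(t,\mathbf x)|\le c\,\ell(t)\,(1+|\mathbf x|)$. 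Since $\ell\in L^1(0,T)$, the classical Carath\'eodory existence-uniqueness theorem (see \cite{filippov}) provides a unique maximal absolutely continuous solution, and the a priori bound obtained in the next step rules out finite-time blow-up, so the solution is defined on all of $[0,T]$.

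Next I would establish the $R$-bound. Set $r(t):=\max_{1\le i\le N}|x_i^N(t)|$, an absolutely continuous function with $r(0)\le\beta:=\sup\{|x|:x\in B\}$. For a.e.\ $t$, pick an index $i$ realizing the maximum, so that $r'(t)\le|\dot x_i^N(t)|$, and estimate, using (iii) and the growth of $K$ implied by (ii),
$$|\dot x_i^N(t)|\le\frac1N\sum_{j=1}^N\ell(t)\big(1+|x_i^N(t)-x_j^N(t)|\big)+\ell(t)\big(1+|x_i^N(t)|\big)\le\ell(t)\big(2+3r(t)\big);$$
here the normalization $\frac1N\sum_j$ is precisely what makes the bound $N$-free. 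Applying Gronwall's inequality to $r'\le 2\ell(t)+3\ell(t)r$ (conveniently via $w:=r+\tfrac23$, which satisfies $w'\le 3\ell(t)w$) gives
$$r(t)\le\Big(\beta+\tfrac23\Big)\exp\Big(3\int_0^t\ell(\theta)\,d\theta\Big)-\tfrac23\le\Big(\beta+\tfrac23\Big)e^{3\|\ell\|_{L^1(0,T)}}=:R,$$
a constant depending only on $T$, $B$ and $\ell$. (If one prefers not to differentiate a maximum, the same conclusion follows by first bounding $\frac1N\sum_i|x_i^N(t)|$ and then each $|x_i^N(t)|$ separately.)

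Finally, with $|x_i^N(t)|\le R$ in hand for all $i$ and $t$, the same chain of inequalities gives the uniform speed estimate $|\dot x_i^N(t)|\le\ell(t)(1+2R)+\ell(t)(1+R)=(2+3R)\ell(t)$, so with $L:=2+3R$ I obtain, for every index $i$ and all $s\le t$ in $[0,T]$,
$$|x_i^N(t)-x_i^N(s)|\le\int_s^t|\dot x_i^N(\theta)|\,d\theta\le L\int_s^t\ell(\theta)\,d\theta,$$
which is the claimed modulus of continuity (and reduces to the constant $(2+3R)\|\ell\|_{L^\infty(0,T)}$ appearing in Remark \ref{fromnowon} when $\ell\in L^\infty$). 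I do not expect a genuine obstacle here, since the lemma is essentially Picard--Lindel\"of combined with Gronwall; the one point requiring sustained care is the bookkeeping that keeps $R$ and $L$ independent of $N$, which is guaranteed throughout by the mean-field averaging $\frac1N\sum_j K(t,\cdot)$ in the interaction term.
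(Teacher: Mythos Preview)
Your proof is correct and follows essentially the same route as the paper's: verify that the aggregated right-hand side $F_N$ satisfies Carath\'eodory/Lipschitz/growth conditions so that existence and uniqueness hold, apply Gronwall to $\max_i |x_i^N(t)|$ to extract the $N$-independent radius $R$ (the paper uses the integral form of the inequality rather than differentiating the maximum, arriving at $R=(\delta(B)+2\|\ell\|_{L^1})e^{3\|\ell\|_{L^1}}$, but your version is equally valid), and then feed $R$ back into the speed estimate to get $L=2+3R$. The only cosmetic point is that your phrase ``the growth of $K$ implied by (ii)'' should really cite (iii), which is the growth hypothesis you actually use.
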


\begin{proof}
	We begin the proof by showing that, if $K,f\in\Adm_{\ell}([0,T]\times\R^d;\R^d)$ then for any $N\in\N$ the function
	\begin{align*}
	F_N(t,x_1,\ldots,x_N)\!=\!\Bigg(\!\!\frac1N\sum^N_{j=1}K(t,x_1\!-\!x_j)\!+\!f(t,x_1),\ldots,\!\frac1N\sum^N_{j=1}K(t,x_N\!-\!x_j)\!+\!f(t,x_N)\!\Bigg)
	\end{align*}
	belongs to $\Adm_{\gamma}([0,T]\times\R^{dN};\R^{dN})$ for some $\gamma \in L^1(0,T)$. Indeed, the function $F_N$ is Carath\'eodory by definition. Now, fix $x=(x_1,\ldots,x_N)$ and $y=(y_1,\ldots,y_N)\in\R^{dN}$. It holds
	\[\begin{split}
	|F_N(t,x)-F_N(t,y)|
	&\le\sum^N_{i=1}\!\left(\!\frac1N\sum^N_{j=1}\big|K(t,x_i\!-\!x_j)\!-\!K(t,y_i\!-\!y_j)\big|\!+\!\big|f(t,x_i)\!-\!f(t,y_i)\big|\right)\\
	&\le\frac{\ell(t)}{N}\sum^N_{i=1}\sum^N_{j=1}\big|x_i-x_j-y_i+y_j\big|+\ell(t)\sum^N_{i=1}\big|x_i-y_i\big|\\
	&\le3\ell(t)\sum^N_{i=1}\big|x_i-y_i\big|\\
	&\le3\sqrt{N}\ell(t)|x-y|,
	\end{split}\]
	where in the first inequality we estimated the $\ell_2$-norm from above by the $\ell_1$-norm, and in the last one we estimated the $\ell_1$-norm from above by the $\ell_2$-norm times $\sqrt{N}$. A similar computation shows that
	$$|F_N(t,x)|\le3N\ell(t)(1+|x|).$$
	Therefore $F_N\in\Adm_{\gamma}([0,T]\times\R^{dN};\R^{dN})$ for $\gamma = 3N\ell \in L^1(0,T)$, and a usual Cauchy-Lipschitz argument let us conclude that, for every $N\in\N$, system \eqref{discreteeq} has a unique Carath\'eodory solution $(x^N_1,\ldots,x^N_N):[0,T]\to\R^{dN}$.
	
	Let us now fix $N\in\N$ and estimate the growth of $|x_i^N(t)|$ for $i=1,\ldots,N$. Integrating $x_i^N(t)$ in time and taking the norm we obtain
	\begin{align*}
	|x_i^N(t)|&\le|x_{0,i}^N| + \int^t_0 |\dot{x}_i^N(s)| \, ds\\
	&\le|x_{0,i}^N| + \int^t_0 \left(\frac1N\sum^N_{j=1}\big|K\big(t,x^N_i(t)-x^N_j(t)\big)\big|+\big|f\big(t,x^N_i(t)\big)\big|\right) \, ds\\
	&\le|x_{0,i}^N| + \int^t_0 \ell(s)\left(2+2|x_i^N(s)|+\frac1N\sum^N_{j=1}|x_j^N(s)|\right) \, ds.
	\end{align*}
	Set $q^N(t):=\max_{j=1,\ldots,N}|x_j^N(t)|$. Then, the inequalities above imply
	$$q^N(t)\le q^N(0) + \int^t_0 \ell(s)\big(2+3q^N(s)\big) \, ds.$$
	Since $x^N_{0,i}\in B$, 
	from Gronwall's lemma we obtain
	\begin{equation*}
	q_N(t)\le \Bigg(\delta(B) + 2\int^t_0 \ell(s) \, ds\Bigg)\exp\Big(3\int_0^t\ell(s)\,ds\Big),
	\end{equation*}
	where $\delta(B)$ is defined as in \eqref{eq:deltaB}. Therefore, $|x^N_j(t)|\le R$ where
	\begin{equation}\label{Rest}
	R=\Bigg(\delta(B) + 2\int^T_0 \ell(s) \, ds\Bigg)\exp\Big(3\int_0^T\ell(s)\,ds\Big).
	\end{equation}
	This implies that, for all $N\in\N$ and $i= 1,\ldots,N$, we have
	\[\begin{split}
	|\dot{x}^N_i(t)|&\le\frac1N\sum^N_{j=1}\big|K\big(t,x^N_i(t)-x^N_j(t)\big)\big|+\big|f\big(t,x^N_i(t)\big)\big|\\
	&\le\ell(t)\left(2+2|x_i^N(t)|+\frac1N\sum^N_{j=1}|x_j^N(t)|\right)\\
	&\le(2+3R)\ell(t),
	\end{split}\]
	which, integrating between $s$ and $t$ implies
	$$|x_i^N(t)-x_i^N(s)|\le(2+3R)\int^t_s\ell(\theta)\,d\theta.$$
	Setting $L:=2+3R$, the above inequality gives us the uniform continuity of $x^N$ with modulus of continuity uniform in $N$ given by
	\begin{equation}\label{eq:unifmodulus}
	\omega(t,s)=L\int^t_s\ell(\theta)\,d\theta,
	\end{equation}
	which concludes the proof.
\end{proof}

We now establish the existence and uniqueness of solutions of system \eqref{eq:compacteq}. Informally, to do so we consider the solutions $(x^N_1,\ldots,x^N_N)$ of the discrete convolution-type ODE systems \eqref{discreteeq}, write them in the form of empirical measures
\begin{align*}
\rho^N(t) = \frac{1}{N}\sum^N_{i = 1}\delta_{x^N_i(t)}, \quad \text{for all } t \in [0,T],
\end{align*}
and finally take the limit as $N\to\infty$ in the Wasserstein space of probabilities. This procedure, also known as \textit{mean-field limit}, allows us to extend the results obtained in Lemma \ref{le:discrunif} to solutions of \eqref{eq:compacteq}.

We first need a preliminary estimate, a variant of which is Lemma 4.7 of \cite{CanCarRos10}.

\begin{lemma}\label{p-lipkernel}
	Fix $T>0$ and $K\in\Adm_{\ell}([0,T]\times\R^d;\R^d)$, and let $\mu_1,\mu_2:[0,T]\to\mathcal{P}_c(\R^d)$ be two continuous maps with respect to $\W_1$ satisfying for some $R>0$
	\begin{align}\label{eq:bsupp}
	\supp(\mu_i(t))\subseteq B(0,R),\qquad\text{for every }t\in[0,T],\ i=1,2.
	\end{align}
	Then
	$$\|(K*\mu_1)(t,\cdot)-(K*\mu_2)(t,\cdot)\|_{L^{\infty}(\R^d)}\le\ell(t)\W_1(\mu_1(t),\mu_2(t))\qquad\text{for every }t\in[0,T].$$
\end{lemma}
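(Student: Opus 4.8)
The statement is a pointwise-in-time Lipschitz estimate for the convolution against an admissible kernel, so the plan is to fix $t\in[0,T]$ and work with the measures $\mu_1(t),\mu_2(t)$ as fixed probability measures supported in $B(0,R)$. For such a fixed $t$, pick any optimal transport plan $\pi\in\Gamma_o(\mu_1(t),\mu_2(t))$ realizing $\W_1(\mu_1(t),\mu_2(t))$ (nonempty by the discussion after \eqref{e_Wp}). Then for any $x\in\R^d$, write
\[
(K*\mu_1)(t,x)-(K*\mu_2)(t,x)=\int_{\R^d}K(t,x-y)\,d\mu_1(t,y)-\int_{\R^d}K(t,x-z)\,d\mu_2(t,z)=\int_{\R^{2d}}\big(K(t,x-y)-K(t,x-z)\big)\,d\pi(y,z),
\]
using that the marginals of $\pi$ are $\mu_1(t)$ and $\mu_2(t)$. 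This is the key algebraic step: the convolution difference becomes a single integral against the transport plan of a kernel difference.

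Next I would apply the Lipschitz property $(ii)$ of Definition \ref{def:adm} to the integrand: $|K(t,x-y)-K(t,x-z)|\le\ell(t)|(x-y)-(x-z)|=\ell(t)|y-z|$. Taking norms and passing the norm inside the integral gives
\[
|(K*\mu_1)(t,x)-(K*\mu_2)(t,x)|\le\int_{\R^{2d}}|K(t,x-y)-K(t,x-z)|\,d\pi(y,z)\le\ell(t)\int_{\R^{2d}}|y-z|\,d\pi(y,z)=\ell(t)\,\W_1(\mu_1(t),\mu_2(t)),
\]
where the last equality uses that $\pi$ is optimal for the $p=1$ Wasserstein cost. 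Since the right-hand side does not depend on $x$, taking the supremum over $x\in\R^d$ yields the claimed bound on $\|(K*\mu_1)(t,\cdot)-(K*\mu_2)(t,\cdot)\|_{L^\infty(\R^d)}$, and since $t$ was arbitrary the estimate holds for every $t\in[0,T]$.

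There is essentially no serious obstacle here; the argument is a routine coupling estimate. The only minor points to be careful about are: that the convolution integrals are finite and the manipulation with the plan $\pi$ is legitimate — this follows from condition $(iii)$ of Definition \ref{def:adm} together with the compact support assumption \eqref{eq:bsupp}, which makes $K(t,\cdot)$ bounded on the relevant region, so all integrals are over bounded integrands against probability measures; and that the existence of an optimal plan $\pi\in\Gamma_o$ is exactly what was recalled after \eqref{e_Wp}. The support hypothesis \eqref{eq:bsupp} is not strictly needed for the Lipschitz bound itself (which only uses $(ii)$), but it guarantees finiteness and is harmless to carry along; alternatively one could invoke the Kantorovich--Rubinstein dual formulation of $\W_1$ directly, but the primal coupling argument is the cleanest.
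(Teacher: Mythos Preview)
Your proof is correct and follows essentially the same approach as the paper: fix $t$, choose an optimal plan $\pi\in\Gamma_o(\mu_1(t),\mu_2(t))$, rewrite the convolution difference as a single integral against $\pi$, and apply the Lipschitz bound $(ii)$ from Definition~\ref{def:adm} to conclude. The only cosmetic difference is that the paper writes the integrals over $B(0,R)^2$ rather than $\R^{2d}$, which is equivalent by the support hypothesis~\eqref{eq:bsupp}.
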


\begin{proof}
	Fix $t\in[0,T]$ and take $\pi\in\Gamma_o(\mu_1(t),\mu_2(t))$. Since the marginals of $\pi$ are by definition $\mu_1(t)$ and $\mu_2(t)$, it follows
	\begin{align*}
	(K*\mu_1)(t,x)-&(K*\mu_2)(t,x)\\
	&=\int_{B(0,R)}\!\!\!K(t,x-y)\,d\mu_1(t,y)-\int_{B(0,R)}\!\!\!K(t,x-z)\,d\mu_2(t,z)\\
	&=\int_{B(0,R)^2}\big(K(t,x-y)-K(t,x-z)\big)\,d\pi(y,z).
	\end{align*}
	By hypothesis \eqref{eq:bsupp} and the $\ell$-admissibility of $K$, we have
	\begin{align*}
	\big|(K*\mu_1)(t,x)-(K*\mu_2)(t,x)\big|&\le\int_{B(0,R)^2}\big|K(t,x-y)-K(t,x-z)\big|\,d\pi(y,z)\\
	&\le\ell(t)\int_{B(0,R)^2}|y-z|\,d\pi(y,z)\\
	&=\ell(t)\W_1(\mu_1(t),\mu_2(t)),
	\end{align*}
	which concludes the proof.
\end{proof}

We are finally ready to prove Theorem \ref{th:bound}.

\begin{proof}[Proof of Theorem \ref{th:bound}]
	For every $N\in\N$, let $x^N_{0,1},\ldots,x^N_{0,N}$ be such that the empirical measure
	\begin{align*}
	\rho^N_0=\frac{1}{N}\sum^N_{i=1}\delta_{x^N_{0,i}}
	\end{align*}
	tends to $\rho_0$ weakly*, hence $\W_1(\rho_0,\rho^N_0)\to0$ as $N\to\infty$. For every $N\in \N$, consider now the unique solution $x^N = (x^N_1,\ldots,x^N_N)$ of system \eqref{discreteeq} with initial datum $(x^N_{0,1},\ldots,x^N_{0,N})$, and denote by
	\begin{align*}
	\rho^N(t) = \frac{1}{N}\sum^N_{i = 1} \delta_{x^N_{i}(t)}, \quad \text{ for every } t\in [0,T],
	\end{align*}
	the empirical measure curve supported on the trajectories of $x^N$. From Proposition \ref{p-rewritten} follows that $\rho^N$ is the solution of \eqref{eq:compacteq} with initial datum $\rho^N_0$.
	
	By Lemma \ref{le:discrunif}, the elements of the sequence $(\rho^N)_{N \in \N} \subset \mathcal{C}([0,T];\mathcal{P}_1(B(0,R)))$ have support uniformly contained in the ball $B(0,R)$, where $R$ is given by \eqref{Rest}, and they are uniformly continuous with modulus of continuity $\omega$ given by \eqref{eq:unifmodulus} uniform in $N$. 
	
	Hence the following holds:
	\begin{itemize}
		\item $(\rho^N)_{N \in \N}$ is equicontinuous and is contained in a closed subset of $\mathcal{C}([0,T];\mathcal{P}_1(B(0,R)))$, because of the uniform modulus of continuity;
		\item for every $t \in [0,T]$, the sequence $(\rho^N(t))_{N \in \N}$ is relatively compact in $\mathcal{P}_1(B(0,R))$ equipped with the $\W_1$ metric. This holds because $(\rho^N(t))_{N \in \N}$ is a tight sequence, since $B(0,R)$ is compact, and hence relatively compact with respect to the weak* convergence due to Prokhorov's Theorem. By Proposition 7.1.5 of \cite{AGS} and the uniform integrability of the first moments of the family $(\rho^N(t))_{N \in \N}$ follows the relative compactness also in the metric space $(\mathcal{P}_1(B(0,R)),\W_1)$.
	\end{itemize}
	Therefore, we can apply the Ascoli-Arzel\'{a} Theorem for functions with values in a metric space 
	to infer the existence of a subsequence $(\rho^{N_k})_{k \in \N}$ of $(\rho^N)_{N \in \N}$ such that
	$$\lim_{k\to\infty}\W_1(\rho^{N_k}(t),\rho(t)) = 0 \quad \text{ uniformly for a.e. } t \in [0,T],$$
	for some uniformly continuous curve $\rho \in \mathcal{C}([0,T];\mathcal{P}_1(B(0,R)))$, again with $\omega$ as modulus of continuity. The property that $\W_1(\rho^N_0,\rho_0) \rightarrow 0$ as $N\to\infty$ now obviously implies $\rho(0) = \rho_0$.
	
	We are now left with verifying that $\rho$ is a solution of \eqref{eq:compacteq}. From the computations in Proposition \ref{p-rewritten} follows that
	for all $t \in [0,T]$ and for all $\phi \in \mathcal{C}^{\infty}_0([0,T];\mathcal{C}^{\infty}_b(\O))$ it holds
	\begin{align*}
	\langle \phi(t,\cdot), \rho^N(t)\rangle& - \langle \phi(0,\cdot), \rho^N_0\rangle =\int^t_0 \frac{d}{ds} \langle \phi(s,\cdot), \rho^N(s) \rangle ds \\
	&= \int^t_0\! \int_{\R^d} \!\!\left(\frac{\partial \phi}{\partial s}(s,x) \!+\! \left((K*\rho^N)(s,x) \!+\! f(s,x)\right) \! \cdot \!\nabla\phi(s,x) \!\right) \!d\rho^N(s,x) ds.
	\end{align*}
	We now want to prove that
	\begin{align*}
	\lim_{N\to\infty} \int^t_0 \int_{\R^d} &\left(\frac{\partial \phi}{\partial s}(s,x) + \left((K*\rho^N)(s,x) + f(s,x)\right) \cdot \nabla\phi(s,x) \right) d\rho^N(s,x) ds \\
	&=\int^t_0 \int_{\R^d} \left(\frac{\partial \phi}{\partial s}(s,x) + \left((K*\rho)(s,x) + f(s,x)\right) \cdot \nabla\phi(s,x) \right) d\rho(s,x) ds.
	\end{align*}
	To do so, notice that by Lemma \ref{p-lipkernel} and the uniform $\W_1$ convergence of the $\rho^N$ to $\rho$, it holds
	\begin{align*}
	\int^t_0 \int_{\R^d} &\left|\left((K*\rho^N)(s,x) - (K*\rho)(s,x) \right) \cdot \nabla\phi(s,x) \right| d\rho(s,x) ds \\
	&\leq \int^t_0 \ell(s) \W_1(\rho^N(s),\rho(s))\left[\int_{\R^d} |\nabla\phi(s,x) | d\rho(s,x)\right] ds \\
	& \leq \sup_{t \in [0,T]} \W_1(\rho^N(t),\rho(t)) \int^t_0 \ell(s) \left[\int_{\R^d} |\nabla\phi(s,x) | d\rho(s,x)\right] ds \\
	&\to0\text{ as }N\to\infty,
	\end{align*}
	since $\ell \in L^1(0,T)$, $\nabla \phi$ is bounded and $\rho$ has compact support.
	
	
	Therefore, since by assumption $\phi(T,\cdot) = \phi(0,\cdot) \equiv 0$, we obtain from the dominated convergence theorem
	\begin{align*}
	\int^T_0 \int_{\R^d} \left(\frac{\partial \phi}{\partial s}(s,x) + \left((K*\rho)(s,x) + f(s,x)\right) \cdot \nabla\phi(s,x) \right) d\rho(s,x) ds = 0,
	\end{align*}
	which proves that $\rho$ is a solution of \eqref{eq:compacteq} with initial datum $\rho_0$.
	
	The uniqueness of $\rho$ is a consequence of Theorem 3.10 of \cite{CanCarRos10}.
\end{proof}


\end{document}